\def\Li{\mathrm{Lip}}
\newcommand{\R}{{\mathbb R}}
\newcommand{\supp}{{\mathrm{supp}\,}}
\newcommand{\ess}{\mathrm{ess}}
\newcommand{\loc}{\mathrm{loc}}
\newcommand{\Ch}{\operatorname{Ch}}
\newcommand{\Lip}{\operatorname{Lip}}
\newcommand{\cutoff}{\mathsf{Cutoff}}
\newtheorem{thm}{Theorem}[section]
\newtheorem{coro}[thm]{Corollary}
\newtheorem{cor}[thm]{Corollary}
\newtheorem{lemma}[thm]{Lemma}
\newtheorem{lem}[thm]{Lemma}
\newtheorem{pro}[thm]{Proposition}
\newtheorem{prop}[thm]{Proposition}
\theoremstyle{definition}
\newtheorem{defn}[thm]{Definition}
\newtheorem{rem}[thm]{Remark}
\newtheorem{rem*}[thm]{Remark}
\newcommand{\Hmm}[1]{\leavevmode{\marginpar{\tiny%
$\hbox to 0mm{\hspace*{-0.5mm}$\leftarrow$\hss}%
\vcenter{\vrule depth 0.1mm height 0.1mm width \the\marginparwidth}%
\hbox to 0mm{\hss$\rightarrow$\hspace*{-0.5mm}}$\\\relax\raggedright
#1}}}
\begin{document}

\title[Harmonic functions on metric measure spaces] 
{Harmonic functions on metric measure spaces} 
\author{Bobo Hua}
\address{School of Mathematical Sciences, LMNS, Fudan University, Shanghai 200433, China; Shanghai Center for Mathematical Sciences, Fudan University, Shanghai 200433, China}
\email{bobohua@fudan.edu.cn}
\author{Martin Kell}
\address{Mathematisches Institut, Universit\"at T\"ubingen, Auf der Morgenstelle
10, 72076 T\"ubingen, Germany}
\email{martin.kell@math.uni-tuebingen.de}
\author{Chao Xia}
\address{School of Mathematical Sciences,
Xiamen University,  Xiamen, 
361005, P.R. China}
\email{chaoxia@xmu.edu.cn}


\thanks{The research leading to these results has
received funding from the European Research Council under the
European Union's Seventh Framework Programme (FP7/2007-2013) / ERC
grant agreement n$^\circ$ 267087. M.K. was supported by the IMPRS
``Mathematics in the Sciences''}

\begin{abstract}
In this paper,  we study harmonic functions on metric measure spaces
with Riemannian Ricci curvature bounded from below, which were introduced by
Ambrosio-Gigli-Savar\'e. We prove a Cheng-Yau type local gradient estimate for
harmonic functions on these spaces. Furthermore, we derive various optimal
dimension estimates for spaces of polynomial growth harmonic functions on
metric measure spaces with nonnegative Riemannian Ricci curvature.
\end{abstract}
\maketitle

\section{Introduction}

In \cite{BakryEmery85} Bakry and \'Emery introduced  the so-called
$\Gamma$-calculus and a purely analytical \textit{Curvature-Dimension
condition} $BE(K,N), N \in [1,\infty]$  for Riemannian manifolds,
which are applicable to the general setting of Dirichlet forms and
the associated Markov semigroups.
Some years later
Lott-Villani \cite{LottVillani09} and Sturm \cite{Sturm06a,Sturm06b}
introduced independently another \textit{Curvature-Dimension condition} $CD(K,N), N \in [1,\infty]$  for general metric measure spaces (mms for
short) coming from a better understanding
of gradient flows on associated Wasserstein spaces. To overcome the lack of a local-to-global property for $CD(K,N)$ with finite $N$,
Bacher-Sturm \cite{BS10} introduced a weaker notion called \textit{reduced Curvature-Dimension condition} $CD^*(K,N), N\in [1,\infty)$.  The two notions $BE(K,N)$ and $CD(K,N)$ are both equivalent
to the condition that for weighted Riemannian manifolds the weighted $N$-Ricci curvature has lower bound $K$.

Most recently, Ambrosio-Gigli-Savar\'e made a breakthrough in series
of fundamental papers \cite{AGS2011, AGS2011a, AmbrosioGigliMondinoRajala12},
by showing that
the two notions $CD(K,\infty)$ and $BE(K,\infty)$ 
are equivalent for \textit{infinitesimal Hilbertian} mms. They gave a new notion $RCD(K,\infty)$ to indicate such spaces and called them \textit{mms with Riemannian Ricci curvature bounded from below}.  Actually these spaces exclude Finsler manifolds since the infinitesimal Hilbertian property implies that the heat flow is linear. 
Later Erbar-Kuwada-Sturm \cite{ErbarKuwadaSturm13}  introduced for
finite dimensional constant $N$ the class  $RCD^*(K,N)$  and they
established the equivalence between $CD^*(K,N)$ and $BE(K,N)$ for
infinitesimal Hilbertian mms. This was also
independently discovered  by
Ambrosio-Mondino-Savar\'e \cite{AMS13}.


The main goal of this paper is to study harmonic functions on
$RCD^*(K,N)$ mms for \textit{finite} $N$ (see Definition \ref{RCD}). We refer to  \cite{ErbarKuwadaSturm13} for various equivalent definitions of $RCD^*(K,N)$. With the calculus developed in recent years, one may
expect that many results in smooth Riemannian manifolds and
Alexandrov spaces (see
\cite{BuragoGromovPerelman92,BuragoBuragoIvanov01} for definitions)
can be extended to $RCD^*(K,N)$ mms.  We will show in this
paper that a local calculus can be developed in order to prove local gradient estimates for harmonic functions,
as well as 
dimension estimates for spaces of polynomial growth harmonic
functions.

Let us start with a brief introduction of the framework. Throughout
the paper,  we assume
$$\begin{array}{lll}&&(X, d, m)\hbox{  is a metric measure space,} \\
&& \hbox{ where }(X, d)\hbox{ is a complete and separable metric space, }\\
&&\hbox{ and }m\hbox{ is a nonnegative }\sigma\hbox{-finite Borel measure.}\end{array}$$
Note that we do not require $X$ to be compact. We make the setup for
$\sigma$-finite Borel measures in order to
include important geometric objects such as noncompact finite
dimensional Riemannian manifolds with Ricci lower bounds equipped
with the volume measure, the measured Gromov-Hausdorff limit spaces
of Riemannian manifolds with a uniform Ricci lower bound and a
uniform dimension upper bound equipped with a natural Radon measure,
see Cheeger-Colding
\cite{CheegerColding97,CheegerColding00II,CheegerColding00III} and
finite dimensional Alexandrov spaces with Ricci lower bounds, see
Zhang-Zhu \cite{ZhangZhu10} and Petrunin \cite{Petrunin11}.

\

The first part of the paper is devoted to a quantitative gradient
estimate of harmonic functions. In 1975, Yau \cite{Yau75} proved a
Liouville type theorem for harmonic functions on Riemannian manifolds
with nonnegative Ricci curvature. Then Cheng-Yau \cite{ChengYau75}
used Bochner's technique to derive a local gradient estimate for
harmonic functions, which is now a fundamental result in geometric
analysis. On general metric spaces,  Bochner's technique fails due to the lack of higher differentiability of the metric. 
By the semigroup approach, Garofalo-Mondino
\cite{Garofalo2013} proved Li-Yau type global gradient estimate for
solutions of heat equations on $RCD^*(K,N)$ mms equipped with a probability
measure. Their arguments heavily rely on the probability measure
assumption. A generalization to the $\sigma$-finite measure case
would meet essential difficulties. In addition, their results are
global from which one cannot easily derive the local version.

Our concern is to obtain a local gradient estimate for harmonic
functions on $RCD^*(K,N)$ mms with $\sigma$-finite measures. 
For this purpose, we shall first need a local version of  Bochner inequality for $RCD^*(K,N)$ mms.  The global version was proven by
Erbar-Kuwada-Sturm \cite{ErbarKuwadaSturm13}. Here ``global'' means the statement of their Bochner inequality is only valid
for global $W^{1,2}$-functions. 
 Note that Zhang-Zhu \cite{Zhang2012} proved a
similar Bochner inequality on Alexandrov spaces with Ricci curvature
bounded below. A delicate local structure, so-called
$DC$-differential structure (see e.g. \cite{Perelman}), of Alexandrov spaces
plays an essential role in the proof of \cite{Zhang2012}. This rules
out the possibility of this strategy in our setting. Nevertheless,
we can choose nice cut-off functions and apply the global Bochner
inequality proven by Erbar-Kuwada-Sturm \cite{ErbarKuwadaSturm13} to
derive the local one. This is the novelty of our
approach. An important ingredient we need is the local Lipschitz
regularity for $W_{\rm loc}^{1,2}$ functions with Laplacian in
$L^p$, which was obtained independently by Jiang \cite{Jiang2013}
and the second author \cite{Kell2013} using the method initiated in
\cite{Koskela2003,Jiang2011}, see Lemma \ref{l:Lipschitz regularity}
below.

\begin{thm}[Local Bochner inequality] \label{t:Bochner and W12}
Let $(X, d, m)$ be an $RCD^*(K,N)$ mms. Let $u$ be a function in
$\mathcal{D}_{L^4_\loc}(\Delta)$ with $\Delta u\in W_{\loc}^{1,2}\cap
L_{\loc}^{p}(X,d,m)$ for $p>N$. Then
\[
|\nabla u|_w^{2}\in W_{\loc}^{1,2}(X,d,m)
\]
 and the Bochner inequality holds in the weak sense of measures
\begin{equation}
\mathcal{L}_{|\nabla u|_w^{2}}\ge2\left(\frac{(\Delta u)^{2}}{N}dm+\langle\nabla u,\nabla(\Delta u)\rangle dm+K|\nabla u|_w^{2}dm\right),\label{e:Bochner sigma finite}
\end{equation}
that is,
for all $\varphi\in W^{1,2}(X,d,m)$ with compact support we have
\begin{eqnarray*}
\int\langle\nabla\varphi,\nabla|\nabla u|_w^{2}\rangle dm
& \ge & 2\Big({\displaystyle \int\varphi\frac{(\Delta u)^{2}}{N}dm+\int\varphi\langle\nabla u,\nabla(\Delta u)\rangle dm}\\
 &  & +K\int\varphi|\nabla u|_w^{2}dm\Big).
\end{eqnarray*}
\end{thm}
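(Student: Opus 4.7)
The plan is to localize the global Bochner inequality of Erbar--Kuwada--Sturm \cite{ErbarKuwadaSturm13} by multiplying $u$ by a carefully chosen cutoff. Fix $\varphi\in W^{1,2}(X,d,m)$ with compact support $\Omega_{0}$ and pick open sets $\Omega_{0}\Subset\Omega_{1}\Subset\Omega_{2}$ with $\overline{\Omega_{2}}$ compact. First I would invoke Lemma \ref{l:Lipschitz regularity}: since $\Delta u\in L^{p}_{\loc}$ with $p>N$, the function $u$ is locally Lipschitz, so $u,|\nabla u|\in L^{\infty}(\Omega_{2})$. Next I would choose a good cutoff $\chi\colon X\to[0,1]$ with $\chi\equiv 1$ on $\Omega_{1}$, $\supp\chi\subset\Omega_{2}$, and $\chi\in\mathcal{D}(\Delta)$ satisfying $\chi,|\nabla\chi|,\Delta\chi\in L^{\infty}(X)$ together with $\Delta\chi\in W^{1,2}(X)$. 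Such cutoffs can be produced on $RCD^{*}(K,N)$ spaces by smoothing a coarse Lipschitz bump via the heat semigroup.

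Set $\tilde u:=\chi u$; it lies in $L^{\infty}\cap W^{1,2}(X)$ with compact support, and the Leibniz rules on infinitesimally Hilbertian spaces give
$$\Delta\tilde u=\chi\,\Delta u+2\langle\nabla\chi,\nabla u\rangle+u\,\Delta\chi.$$
Using the choice of $\chi$, the local Lipschitz regularity of $u$, and the assumption $\Delta u\in W^{1,2}_{\loc}(X)$, each term can be shown to lie in $W^{1,2}(X)$, placing $\tilde u$ in the class to which the global Bochner inequality applies. That inequality then yields $|\nabla\tilde u|^{2}\in W^{1,2}(X)$ and the weak Bochner inequality for $\tilde u$ tested against every $\phi\in W^{1,2}(X)$.

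To conclude, I would take $\phi=\varphi$ in the global Bochner inequality for $\tilde u$. Since $\supp\varphi\subset\Omega_{0}\subset\{\chi=1\}^{\circ}$, locality of the Sobolev calculus on mms gives $\nabla\tilde u=\nabla u$, $\Delta\tilde u=\Delta u$ and $\nabla|\nabla\tilde u|^{2}=\nabla|\nabla u|^{2}$ $m$-a.e.\ on $\supp\varphi$, so that the resulting inequality is exactly \eqref{e:Bochner sigma finite}. Varying $\Omega_{0}$ over an exhaustion of $X$ simultaneously delivers $|\nabla u|^{2}\in W^{1,2}_{\loc}(X,d,m)$.

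The main obstacle is the verification that $\Delta\tilde u\in W^{1,2}(X)$, and in particular that the cross term $\langle\nabla\chi,\nabla u\rangle$ lies in $W^{1,2}(X)$: one cannot differentiate $\nabla u$ freely, since Bochner for $u$ is precisely what is being proved. I expect to handle this by shifting the bad derivatives onto $\chi$ using the higher regularity of the cutoff and Gigli's $W^{2,2}$-calculus on $RCD^{*}(K,N)$ spaces, or else by a heat-semigroup approximation $\tilde u_{t}:=h_{t}\tilde u$, for which the global Bochner inequality is already known, and passing to the limit $t\downarrow 0$ via Mosco-type convergence of the relevant energy forms.
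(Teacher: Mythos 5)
Your strategy coincides with the paper's: build a heat-semigroup cutoff $\chi$ that is identically constant near the support of the test function, note via Lemma \ref{l:Lipschitz regularity} that $u$ is locally Lipschitz so that $\chi u$ is Lipschitz with compact support, apply the global Bochner inequality of Lemma \ref{l:Bochner formula} to $\chi u$, and conclude by locality of $\mathcal{L}$ and of $\langle\nabla\cdot,\nabla\cdot\rangle$. You also correctly isolate the crux, namely showing $\Delta(\chi u)=\chi\Delta u+u\Delta\chi+2\langle\nabla\chi,\nabla u\rangle\in W^{1,2}(X)$, with the cross term as the problematic piece. The one substantive divergence is how that step is closed. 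The paper does it by polarization: on the bounded set carrying $\chi$ one has $2\langle\nabla u,\nabla\chi\rangle=|\nabla(u+\chi)|^{2}-|\nabla u|^{2}-|\nabla\chi|^{2}$, and each square lies in $W^{1,2}_{\loc}$ by the second assertion of Savar\'e's statement quoted in Lemma \ref{l:Bochner formula} (a bounded Lipschitz function with Laplacian in $W^{1,2}$ has $|\nabla\cdot|^{2}\in W^{1,2}$); since the cross term is compactly supported it is then globally in $W^{1,2}$. Of your two proposed substitutes, be cautious with the first: $W^{2,2}$-regularity and the Hessian calculus on $RCD^{*}(K,N)$ spaces are themselves normally derived from the (self-improved) Bochner inequality, so invoking them here risks circularity. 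The heat-flow approximation with a limiting argument could be made to work but is considerably heavier; the polarization identity together with the already-available regularity statement of Savar\'e is the short route the paper takes, and without one of these your argument has an open gap exactly at the step you flagged.
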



\

One of our main result is Cheng-Yau type local gradient estimate for
harmonic functions on $RCD^*(K,N)$-mms. A function $u$ is called harmonic (subharmonic resp.) on an open set
$\Omega\subset X$ if $u\in W_{\loc}^{1,2}(\Omega)$ and
\[
\int_{\Omega}\langle\nabla u,\nabla\varphi\rangle dm=0\ (\leq 0\
\mathrm{resp.})
\]
 for any $0\leq\varphi\in\Li(\Omega)$ with compact support. This is equivalent to say that $\mathcal{L}_u=0 \ (\geq 0\
\mathrm{resp.})$ (see Section 2).

\begin{thm}[Cheng-Yau type gradient estimate]\label{t:Yau's gradient estimate}
Let $(X, d, m)$ be an $RCD^*(K,N)$ mms for $K\leq 0$ and $N\in [1,\infty)$. Then there
exists a constant $C=C(N)$ such that every positive harmonic
function $u$ on geodesic ball $B_{2R}\subset X$ satisfies
\begin{eqnarray*}
\frac{|\nabla  u|_w}{u}\leq C\frac{1+\sqrt{-K}R}{R} \ \ \ \ \
\mathrm{in}\ B_R.
\end{eqnarray*}
\end{thm}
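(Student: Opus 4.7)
The plan is to adapt Cheng--Yau's classical Bochner argument to the $RCD^{*}(K,N)$ setting by applying the local Bochner inequality of Theorem \ref{t:Bochner and W12} to $f=\log u$, and then substituting the pointwise maximum principle (unavailable in the metric measure setting) by an integral Moser-type iteration against a good cut-off.

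\textbf{Setup, regularity, and Bochner for $\log u$.} First I would invoke the local Lipschitz regularity (Lemma \ref{l:Lipschitz regularity}) applied to the harmonic $u$, yielding $u\in\Li_{\loc}(B_{2R})$, and then apply Theorem \ref{t:Bochner and W12} to $u$ itself (with $\Delta u\equiv 0$) to obtain $|\nabla u|^{2}\in W^{1,2}_{\loc}(B_{2R})$. After replacing $u$ by $u+\varepsilon$ and letting $\varepsilon\downarrow 0$ at the end, I may assume $u\ge\varepsilon>0$ on $B_{2R}$. Set $f:=\log u$ and $w:=|\nabla f|^{2}=|\nabla u|^{2}/u^{2}$; since $1/u^{2}$ is locally Lipschitz and bounded, $w\in W^{1,2}_{\loc}$, and a routine chain-rule computation against compactly supported test functions gives $\Delta f=-w$ weakly. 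In particular $\Delta f\in W^{1,2}_{\loc}\cap L^{\infty}_{\loc}$, so Theorem \ref{t:Bochner and W12} applies to $f$ and produces, in the weak sense of measures,
\[
\mathcal{L}_{w}\;\ge\;2\Bigl(\tfrac{w^{2}}{N}-\langle\nabla f,\nabla w\rangle+Kw\Bigr)dm.
\]
This is the familiar Cheng--Yau differential inequality for $w=|\nabla\log u|^{2}$, with $w^{2}/N$ supplying the nonlinear, dimension-dependent feedback that drives the estimate.

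\textbf{Integral Moser iteration.} Pick a cut-off $\eta\in\Li(B_{2R})$ with $\supp\eta\subset B_{2R}$, $\eta\equiv 1$ on $B_{R}$, $0\le\eta\le 1$ and $|\nabla\eta|\le C/R$, which exist on $RCD^{*}(K,N)$ spaces. Testing the Bochner inequality against $\varphi=\eta^{2q}w^{q-1}$ for parameters $q\ge 1$ (after a standard truncation of $w$ to ensure admissibility), Young's inequality using $|\nabla f|=\sqrt{w}$ absorbs the drift $\langle\nabla f,\nabla w\rangle$ into a fraction of the $w^{2}/N$-term, the curvature contribution $2K\varphi w$ is moved to the right-hand side, and the gradient of $\eta^{2q}$ contributes an error of order $R^{-2}\int_{B_{2R}}\eta^{2q-2}w^{q}\,dm$. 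The outcome should be a Caccioppoli-type estimate
\[
\int \eta^{2q}w^{q+1}\,dm\;\le\;C(N)\,q\,\Bigl(\tfrac{1}{R^{2}}-K\Bigr)\int_{B_{2R}}w^{q}\,dm.
\]
Combining this with the local doubling property and the $(1,2)$-Poincar\'e inequality available on $RCD^{*}(K,N)$ spaces, which in turn yield Saloff-Coste-type Sobolev inequalities, iterating over a nested sequence of radii shrinking to $R$ gives
\[
\sup_{B_{R}}w\;\le\;C(N)\,\frac{(1+\sqrt{-K}\,R)^{2}}{R^{2}}.
\]
Taking square roots and sending $\varepsilon\downarrow 0$ yields the claimed bound.

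\textbf{Main obstacle.} The technically most delicate step is the Caccioppoli estimate: in the smooth setting one simply applies the pointwise maximum principle to $\eta^{2}w$, whereas here $w$ is only in $W^{1,2}_{\loc}$ and everything must be done integrally. Selecting test exponents so that Young's inequality absorbs the drift $\langle\nabla f,\nabla w\rangle$ without destroying the dimensional factor $1/N$, and tracking constants carefully so as to produce the precise scale-invariant form $R^{-2}(1+\sqrt{-K}R)^{2}$, is the computational heart of the argument.
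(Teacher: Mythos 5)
Your overall strategy coincides with the paper's: apply the local Bochner inequality (Theorem \ref{t:Bochner and W12}) to $v=\log u$ using $\mathcal{L}_v=-|\nabla v|^2\,dm$, test the resulting inequality for $w=|\nabla v|^2$ against cut-off--weighted powers of $w$, and run a Moser iteration with the local Sobolev inequality. The regularity preliminaries (Lemma \ref{l:Lipschitz regularity}, $|\nabla u|^2\in W^{1,2}_{\loc}$) are also as in the paper.

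There is, however, one genuine gap, and it sits exactly at the point you wave at with ``tracking constants carefully'': the $K$-dependence of the final constant. The Sobolev inequality available on $RCD^*(K,N)$ spaces (Theorem \ref{Sob}) carries the factor $e^{C(1+\sqrt{-K}R)}$, so if you iterate your Caccioppoli estimate starting from a fixed exponent $q$, this exponential factor survives the iteration and you obtain $\sup_{B_R}w\le C\,e^{C\sqrt{-K}R}R^{-2}$ rather than $C(1+\sqrt{-K}R)^2R^{-2}$; for $K=0$ your outline is complete, but for $K<0$ it does not yield the stated scale-invariant bound. The paper's fix (Lemma \ref{lem1}, following Hua--Xia) is to \emph{start} the iteration at the large exponent $\beta_0=C_0(1+\sqrt{-K}R)$ and to retain, rather than discard, the good nonlinear term $-\beta\int\phi^2 f^{\beta+2}dm$ in \eqref{Peq15}: at exponent $\beta_0$ this term absorbs both the curvature term and the cut-off term (via a Young/H\"older splitting with $\phi=\psi^{\beta_0+2}$), giving an a priori bound $\|f\|_{L^{\beta_1}(B_{3R/2})}\le C(1+\sqrt{-K}R)^2R^{-2}|B_{2R}|^{1/\beta_1}$ with $\beta_1=(\beta_0+1)\chi$. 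In the subsequent iteration from $\beta_1$ upward, the accumulated Sobolev factor is $e^{C\beta_0\sum_k\beta_k^{-1}}=e^{C\beta_0\nu/(2\beta_1)}$, which is a dimensional constant precisely because $\beta_1\gtrsim\beta_0$. Without this device (or an equivalent one), the drift and curvature terms cannot be traded for the claimed $(1+\sqrt{-K}R)^2$ dependence, so you should add this step explicitly.
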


Since Bochner's technique using the maximum principle on Riemannian manifolds is not
available on metric spaces, we adopt the Moser iteration to prove
the local gradient estimate, following the idea of Zhang-Zhu
\cite{Zhang2012}. 
Note that for the case $K<0,$ one shall carry out a more delicate
Moser iteration as done by the first and the third authors in
\cite{HuaXia13, Xia2013}. In order to carry out the Moser iteration,
we need the regularity result, $|\nabla u|_w^2\in W^{1,2}_{\rm
loc}(X),$ for a harmonic function $u$. This follows from a general
result by Savar\'e using Dirichlet form calculation, see
\cite[Lemma~3.2]{Savare2013} or Lemma \ref{l:Bochner formula} below. 
For a different proof of this result
on Alexandrov spaces with Ricci lower bounds, we refer to
\cite[Theorem~1.2]{Zhang2012}. This will also be crucial in
order to prove Theorem~\ref{t:linear growth harmonics}, where we
essentially use the fact that $|\nabla u|_w^2\in W^{1,2}_{\rm loc}(X)$
is subharmonic for every harmonic function $u$ on
$RCD^{*}(0,N)$ mms.



Theorem \ref{t:Yau's gradient estimate} immediately yields Cheng's Liouville theorem for sublinear
growth harmonic functions on $RCD^*(0,N)$ spaces.  

\begin{coro}[Cheng's Liouville theorem]\label{c:sublinear growth}
On an $RCD^*(0,N)$ mms with $N\in [1,\infty)$, there are no nonconstant harmonic functions
of sublinear growth, i.e. if $u$ is harmonic and
$$\limsup_{R\to \infty}\frac{1}{R}\sup_{B_R}|u|=0$$
then it is constant.
\end{coro}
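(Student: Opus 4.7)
The plan is to reduce this to the Cheng--Yau estimate of Theorem \ref{t:Yau's gradient estimate} by a standard shifting and taking-$R$-to-infinity trick. The harmonic function $u$ itself need not be positive, but for every $R>0$ we can set
\[
C_R := \sup_{B_{2R}}|u| + 1, \qquad v_R := u + C_R,
\]
so that $v_R \geq 1 > 0$ on $B_{2R}$ and $v_R$ is still harmonic there (adding a constant does not affect the defining identity $\int \langle \nabla u, \nabla \varphi\rangle\,dm = 0$). Applying Theorem \ref{t:Yau's gradient estimate} to $v_R$ with $K=0$ gives
\[
\frac{|\nabla u|}{u + C_R} = \frac{|\nabla v_R|}{v_R} \leq \frac{C(N)}{R} \qquad \text{on } B_R.
\]

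Since on $B_R$ we have $u + C_R \leq \sup_{B_R}|u| + C_R \leq 2\sup_{B_{2R}}|u| + 1$, this yields the pointwise (a.e.) bound
\[
|\nabla u|(x) \leq \frac{C(N)}{R}\bigl(2\sup_{B_{2R}}|u| + 1\bigr), \qquad x \in B_R.
\]
Fix any $x \in X$ and any $\varepsilon > 0$. By the sublinear growth hypothesis there is $R_0$ so that $\sup_{B_{2R}}|u| \leq 2R\varepsilon$ for all $R \geq R_0$. Taking $R$ large enough that $x \in B_R$ and $R \geq R_0$, the right-hand side is bounded by $4C(N)\varepsilon + C(N)/R$; letting $R\to \infty$ and then $\varepsilon \to 0$ shows $|\nabla u|(x) = 0$ for $m$-a.e.\ $x$.

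Finally, from $|\nabla u| = 0$ $m$-a.e.\ we conclude that $u$ is constant. An $RCD^*(0,N)$ space is a length space, hence connected, and the Sobolev calculus on infinitesimal Hilbertian mms ensures that a locally Sobolev function whose minimal relaxed gradient vanishes is constant on each connected component (alternatively, this follows from the $(1,2)$-Poincar\'e inequality available on $RCD^*(0,N)$ spaces). I expect this last step to be the only place where care is required; the Moser-type machinery is entirely hidden inside Theorem \ref{t:Yau's gradient estimate}, so the argument itself is the classical Yau manipulation transplanted to the mms setting.
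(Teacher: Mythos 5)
Your proposal is correct and is precisely the standard derivation the paper has in mind when it states that Theorem \ref{t:Yau's gradient estimate} ``immediately yields'' the corollary (the paper omits the proof): shift by $\sup_{B_{2R}}|u|+1$ to get a positive harmonic function, apply the $K=0$ gradient estimate, and let $R\to\infty$ using sublinearity. The final step is also fine as you indicate: $|\nabla u|_w=0$ $m$-a.e.\ together with the Poincar\'e inequality (Theorem \ref{t:Poincare}) forces $u$ to equal its mean on every ball, hence to be constant on the connected length space $X$.
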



\

The second part of the paper is on dimension estimates for
spaces of polynomial growth harmonic functions on $RCD^*(K,N)$ mms.
The history leading to these results started in the study of Riemannian geometry.
Cheng-Yau's gradient estimate \cite{ChengYau75} implies that
sublinear growth harmonic functions on Riemannian manifolds with
nonnegative Ricci curvature are constant. Yau
further conjectured in \cite{Yau87,Yau93} that the space of polynomial
growth harmonic functions  on such
manifolds with growth rate less than or equal to $d$ should be of finite dimension. Colding-Minicozzi
\cite{ColdingMinicozzi97JDG,ColdingMinicozzi97,ColdingMinicozzi98Weyl}
gave an affirmative answer to Yau's conjecture in a very general
framework of weighted Riemannian manifolds utilizing volume doubling property and Poincar\'e
inequality which are even adaptable to general metric spaces. A simplified
argument by the mean value inequality can be found in
\cite{ColdingMinicozzi98,Li97} where the dimension estimates are
nearly optimal. This inspired many generalizations on
manifolds
\cite{Tam98,LiWang99,LiWang00,SungTamWang00,KimLee00,Lee04,ChenWang07}.
The crucial ingredients of these proofs are the volume
growth property and the Poincar\'e inequality (or mean value
inequality).

Let $H^q(X):=\{u\in W^{1,2}_{\rm loc}(X): \mathcal{L}_u=0,
|u(x)|\leq C(1+d(x,p))^q\}$ denote the space of polynomial growth
harmonic functions on $X$ with growth rate less than or equal to $q$
for some (hence all) $p\in X.$ Before stating the theorem, we shall
point out a main difference between harmonic functions on Riemannian
manifolds and those on other metric spaces. The unique continuation
property for harmonic functions on mms is unknown,
leaving us with the
 problem of verifying the inner product property of the following bilinear form
 $$\langle u,v\rangle_R=\int_{B_R}uv dm, \ \ u,v\in L^2(X,m),$$ where $B_R$ is a geodesic ball with radius $R.$
 We circumvent this difficulty by a lemma in \cite{Hua11}
 (see Lemma \ref{xxw2} below). By using the Bishop-Gromov volume comparison (see
Theorem~\ref{t:BishopGromov}) and the Poincar\'e inequality (see
Theorem~\ref{t:Poincare}) on $RCD^*(0,N)$ spaces, we obtain
the following optimal dimension estimate for 
$H^q(X)$.

\begin{thm}[Polynomial growth harmonic functions]\label{t:polynomial growth harmonics} Let $(X,d,m)$ be an $RCD^*(0,N)$ mms with $N\in [1,\infty)$. Then there exists some constant $C=C(N)$ such that
$$\dim H^q(X)\leq Cq^{N-1}.$$
\end{thm}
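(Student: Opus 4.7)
The plan is to adapt the Colding-Minicozzi-Li mean value scheme for polynomial growth harmonic functions to the $RCD^*(0,N)$ setting. Three geometric ingredients are in hand: Bishop-Gromov volume doubling (Theorem~\ref{t:BishopGromov}), the $(2,2)$-Poincar\'e inequality (Theorem~\ref{t:Poincare}), and the De Giorgi-Moser mean value inequality for nonnegative subharmonic functions,
$$\sup_{B_r(x)} w \le \frac{C}{m(B_{2r}(x))}\int_{B_{2r}(x)} w\,dm,$$
which follows from doubling and Poincar\'e by a standard Moser iteration. For every harmonic $u$ this applies to $w=u^2$, since $\mathcal{L}_{u^2}=2|\nabla u|^2\,dm\ge 0$; here Lemma~\ref{l:Bochner formula} ensures $|\nabla u|^2\in W^{1,2}_{\loc}$, so that $u^2$ lies in the relevant domain.

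It suffices to prove $\dim V\le Cd^{N-1}$ for every finite dimensional subspace $V\subset H^d(X)$. Fix a base point $p\in X$ and, for $R>0$, consider the symmetric bilinear form $\langle u,v\rangle_R:=\int_{B_R(p)}uv\,dm$ on $V$. Because unique continuation is not known in this framework, $\langle\cdot,\cdot\rangle_R$ might a priori be degenerate on $V$; this is exactly the obstacle removed by Lemma~\ref{xxw2} from~\cite{Hua11}, which produces some $R_0=R_0(V)$ such that $\langle\cdot,\cdot\rangle_R$ is a genuine inner product on $V$ for every $R\ge R_0$.

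For $R\ge R_0$ and $k=\dim V$, choose a $\langle\cdot,\cdot\rangle_R$-orthonormal basis $u_1,\dots,u_k$ of $V$. Writing the evaluation functional $u\mapsto u(x)$ on the Hilbert space $(V,\langle\cdot,\cdot\rangle_R)$ via Riesz representation and estimating its norm by the mean value inequality applied to $u^2$ yields the pointwise bound
$$\sum_{i=1}^k u_i(x)^2\le\frac{C}{m(B_R(p))}\qquad\text{for a.e.\ }x\in B_{R/2}(p).$$
Parallel to this, the polynomial bound $|u(x)|\le C_u(1+d(x,p))^d$ for each $u\in V$, made uniform over the $\langle\cdot,\cdot\rangle_R$-unit sphere of $V$ by finite dimensionality, together with Bishop-Gromov $m(B_{\beta R})/m(B_R)\le\beta^N$, gives for every $\beta>1$ and every $\varepsilon>0$ some $R_1=R_1(V,\beta,\varepsilon)$ and a trace estimate
$$\sum_{i=1}^k\int_{B_{\beta R}(p)} u_i^2\,dm \le (1+\varepsilon)\,\beta^{2d+N}\,k\qquad(R\ge R_1).$$
Combining the pointwise bound with this trace bound and running the Colding-Minicozzi optimization in the scale parameter $\beta$ (in particular exploiting that $\beta^{2d}$ can be kept bounded by choosing $\beta=1+c/d$) produces the sharp $k\le Cd^{N-1}$.

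The main obstacle I expect is passing from the pointwise polynomial growth with $u$-dependent constants to a uniform polynomial growth over the $\langle\cdot,\cdot\rangle_R$-unit sphere of $V$: without this uniformization the trace estimate above would only be linear in $\sum_i C_{u_i}^2$ rather than in $k$. The mechanism is compactness of the unit sphere in the finite dimensional space $V$, combined with continuity of $u\mapsto\|u\|_{L^2(B_R)}$. Granted this uniform control, the remaining steps are formal and rely only on Bishop-Gromov, the mean value inequality, and Bessel's inequality for the Riesz representers of the evaluation functionals in the finite dimensional Hilbert space $(V,\langle\cdot,\cdot\rangle_R)$.
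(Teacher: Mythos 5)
Your overall strategy coincides with the paper's: Lemma \ref{xxw2} to make $\langle\cdot,\cdot\rangle_R$ an inner product, the Colding--Minicozzi trace/growth lemma (the paper's Lemma \ref{lfd1}), and a pointwise bound on $\sum_i u_i^2$ via the mean value inequality applied along the unit sphere of the span (the paper's Lemma \ref{lfd2}). However, there is a genuine quantitative gap, and it costs exactly the sharpness of the exponent. Your pointwise bound $\sum_i u_i(x)^2\le C/m(B_R(p))$ is stated with a position-independent constant and only on $B_{R/2}$; that is the $\beta=2$ regime, and integrating it against the lower bound $k\beta^{-(2d+N+\delta)}$ gives $k\le C\,4^{d}$, which is useless. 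To run the optimization at $\beta=1+\epsilon$ with $\epsilon=c/d$, as you propose, the mean value inequality at a point $x\in B_R$ can only be applied on the ball $B_{(1+\epsilon)R-d(x,p)}(x)\subset B_{(1+\epsilon)R}(p)$, and Bishop--Gromov then gives the position-dependent bound $\sum_i u_i(x)^2\le C\,m(B_R(p))^{-1}\bigl(1+\epsilon-d(x,p)/R\bigr)^{-N}$. The naive integration of this over $B_R$ yields $C\epsilon^{-N}$, hence only $k\le Cd^{N}$. The missing step --- which is the actual content of the paper's Lemma \ref{lfd2} --- is the improvement
\[
\int_{B_R}\Bigl(1+\epsilon-\tfrac{d(x,p)}{R}\Bigr)^{-N}dm(x)\le \tfrac{N}{N-1}\,\epsilon^{-(N-1)}\,m(B_R),
\]
proved by the coarea formula (using $|\nabla d(\cdot,p)|=1$ a.e.), integration by parts in the radial variable, and a second application of Bishop--Gromov to compare $V_p(t)$ with $(t/R)^N V_p(R)$. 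Without this you lose one power of $d$ and do not reach $Cd^{N-1}$.

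A secondary remark: the obstacle you single out --- uniformizing the polynomial growth constants over the unit sphere of $V$ --- is not where the difficulty lies. In the trace lemma one bounds $|\sum a_i u_i|\le(\sum C_{u_i}^2)^{1/2}(1+d(x,p))^d$ by Cauchy--Schwarz for a fixed finite basis, and the iteration underlying Lemma \ref{lfd1} produces a radius $R$ at which the conclusion holds with a constant independent of the $C_{u_i}$, precisely because it is an asymptotic growth-rate statement (``there exists $R>R_0$''), not a bound at every scale. Also, subharmonicity of $u^2$ follows directly from the Leibniz rule $\mathcal{L}_{u^2}=2u\mathcal{L}_u+2|\nabla u|^2\,dm$; invoking the Bochner inequality here is unnecessary, and indeed the paper emphasizes that Theorem \ref{t:polynomial growth harmonics} does not use it.
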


\

For the space of  linear growth harmonic functions, we can give more precise estimate.
\begin{thm}[Linear growth harmonic functions]\label{t:linear growth harmonics} Let $(X,d,m)$ be an $RCD^*(0,N)$ mms with $N\in [1,\infty)$ and $p\in X.$ Suppose the volume growth of $(X,d,m)$
satisfies
\begin{equation}\label{e:volume growth n}
\limsup_{R\to\infty}\frac{m(B_R(p))}{R^n}<\infty
\end{equation} for some $n\leq N,$ then
$$\dim H^1(X)\leq n+1.$$
\end{thm}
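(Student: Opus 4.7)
The plan is to follow the Colding--Minicozzi style dimension-count, using Cheng--Yau's gradient estimate (Theorem \ref{t:Yau's gradient estimate}) to obtain a uniform $L^\infty$ bound on $|\nabla u|$ for every $u\in H^1(X)$, and then to exploit the sharp volume-growth exponent $n$ via Bishop--Gromov, Poincaré, and the unique-continuation workaround of Lemma \ref{xxw2}. For Step 1, let $u\in H^1(X)$ and $R>0$ and apply Theorem \ref{t:Yau's gradient estimate} (with $K=0$) to the positive harmonic function $\tilde u:=u+\sup_{B_{2R}}|u|+1$ on $B_{2R}$; on $B_R$ one obtains
\[
|\nabla u|(x)=|\nabla\tilde u|(x)\le\frac{C}{R}\,\tilde u(x)\le\frac{C}{R}\Bigl(2\sup_{B_{2R}}|u|+2\Bigr).
\]
Since $u$ has linear growth, $\sup_{B_{2R}}|u|\le C_u R$, so $|\nabla u|\le C_u$ a.e.\ on all of $X$.

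For Step 2, assume by contradiction $\dim H^1(X)\ge n+2$ and pick linearly independent $u_0\equiv 1,u_1,\dots,u_{n+1}\in H^1(X)$. By Lemma \ref{xxw2} we may fix $R_0$ so large that the quadratic form $\langle u,v\rangle_R:=\int_{B_R(p)}uv\,dm$ is positive definite on $V:=\operatorname{span}\{u_0,\dots,u_{n+1}\}$ for every $R\ge R_0$; for each such $R$, Gram--Schmidt then produces an $\langle\cdot,\cdot\rangle_R$-orthonormal basis $\{v_i^R\}_{i=0}^{n+1}$ of $V$.

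For Step 3, apply Caccioppoli's inequality to each harmonic $v_i^R$: combined with the sup bound on $v_i^R$ coming from the sub-mean-value property for the subharmonic $(v_i^R)^2$, the volume hypothesis \eqref{e:volume growth n}, and Bishop--Gromov (Theorem \ref{t:BishopGromov}), this yields a uniform upper bound on $\sum_{i=0}^{n+1}\int_{B_R(p)}|\nabla v_i^R|^2\,dm$. One then matches this against a lower bound produced by Step 1 together with the subharmonicity of each $|\nabla v_i^R|^2$ (Theorem \ref{t:Bochner and W12}); showing that as soon as there are more than $n+1$ linearly independent summands the two estimates become incompatible gives the desired contradiction.

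The main obstacle is producing the \emph{sharp} constant $n+1$ in Step 3, rather than a polynomial-in-$n$ bound \`a la Colding--Minicozzi. On a smooth Riemannian manifold one would invoke the pointwise fact that $\nabla u_1(x),\dots,\nabla u_k(x)$ span at most $n$ directions of the tangent space; no such tangent-bundle object is available on a general $RCD^*(0,N)$ mms, so one must extract the $n$-dimensionality purely from averaged $L^2$ identities. This is precisely where the precise volume exponent $n\le N$ (and not merely $N$) enters, via Bishop--Gromov, together with Hua's Lemma \ref{xxw2} to compensate for the absence of unique continuation. All other ingredients---Caccioppoli, Poincaré, the mean value inequality for subharmonic $|\nabla v_i^R|^2$, and Cheng--Yau---are already at hand from earlier in the paper.
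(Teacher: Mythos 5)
There is a genuine gap, and you have in fact located it yourself: your Step 3 is not an argument but a statement of the difficulty. Matching a Caccioppoli/mean-value upper bound on $\sum_i\int_{B_R}|\nabla v_i^R|^2\,dm$ against a lower bound from the gradient estimate is exactly the Colding--Minicozzi counting scheme, and it can only ever produce a bound of the form $\dim H^1(X)\le C(N)$ (it is what the paper uses to get $\dim H^d\le Cd^{N-1}$ in Theorem \ref{t:polynomial growth harmonics}); no choice of constants in that scheme yields the sharp value $n+1$. Moreover, Lemma \ref{xxw2} and Bishop--Gromov, which you lean on to ``extract the $n$-dimensionality,'' do not appear in the paper's proof of this theorem at all: the normalization is not with respect to $\langle\cdot,\cdot\rangle_R$ but with respect to the asymptotic Dirichlet form $D(f,g)=\lim_{R\to\infty}\fint_{B_R}\langle\nabla f,\nabla g\rangle\,dm$ on $H'=\{f\in H^1(X):f(p)=0\}$ (a subspace of codimension at most one, which is where the $+1$ comes from).

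The missing ingredients are the following. First, the mean value theorem at infinity (Theorem \ref{t:mean value theorem}, proved via the weak Harnack inequality), applied to the bounded subharmonic function $|\nabla f|^2$, shows that $D$ is a genuine inner product on $H'$ and that a $D$-orthonormal basis $\{f_i\}_{i=1}^k$ satisfies $\ess\sup_X|\nabla f_i|\le 1$. Second, the pointwise rotation trick (choosing at each point an orthogonal change of basis so that only one $g_i$ is nonzero there) shows that $F=\bigl(\sum_i f_i^2\bigr)^{1/2}$ is $1$-Lipschitz, hence $F(x)\le d(x,p)$; this is the substitute for the tangent-space argument whose absence you correctly worry about. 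Third, integrating the subharmonicity $\mathcal{L}_{F^2}\ge 0$ against annular cut-offs gives
\[
\sum_{i=1}^k\fint_{B_R}|\nabla f_i|^2\,dm\le\frac{R\,|A_R|}{|B_R|},
\]
and since the left side tends to $k$ by $D$-orthonormality, one gets $|B_R|\gtrsim R^{k-\epsilon_1}$ for large $R$, which contradicts \eqref{e:volume growth n} unless $k\le n$. Your Step 1 (a uniform gradient bound via Cheng--Yau) is correct but insufficient: you need the exact asymptotic identity $\lim_R\fint_{B_R}|\nabla f|^2\,dm=\ess\sup_X|\nabla f|^2$, not merely boundedness, to make the orthonormalization at infinity and the final differential inequality work.
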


On Riemannian manifolds, Theorem \ref{t:linear growth harmonics} was studied by Li-Tam
\cite{LiTam89} and the equality case was characterized by
Cheeger-Colding-Minicozzi \cite{CheegerColdingMinicozzi95}. For the
investigation of linear growth harmonic functions, see also Wang
\cite{Wang95}, Li \cite{Li95} on K\"ahler manifolds and
Munteanu-Wang \cite{MunteanuWang11} on weighted Riemannian manifolds
with nonnegative Ricci curvature.

One of the key ingredients of the proof is the following so-called
mean value theorem at infinity for  nonnegative subharmonic functions (see Theorem
\ref{t:mean value theorem}),
$$\lim_{R\to \infty} \frac{1}{m(B_R)}\int_{B_R} udm=\ess \sup_X u.$$
 The original proof for this mean value theorem  by Li
\cite{Li86} (see also \cite[Lemma~16.4]{Li12}) used heat kernel
estimates. It can also be proved by a tricky monotonicity formula
involving the mean value of harmonic functions on geodesic spheres,
see \cite[Theorem 3.3]{MunteanuWang11} and \cite[Corollary
6.6]{Zhang2012}. However, these methods seem hard to be extended to general metric spaces. For our purpose, we present a new proof only using
the weak Harnack inequality for superharmonic functions, which is a
consequence of Moser iteration, see Theorem \ref{t:Moser iteration}
below. This will be the main ingredient to prove the optimal dimensional
bound of the space of linear growth harmonic functions.

We remark that since the proof of above theorem involves the Bochner inequality, Theorem \ref{t:linear growth harmonics}
is the first result on the dimension estimate of linear growth
harmonic functions on nonsmooth metric spaces, even on Alexandrov
spaces \cite{Hua09,Hua11,Jiao12}.

\

To summarize, we prove a local Bochner inequality on $RCD^*(K,N)$
mms. Then we adopt a delicate Moser iteration to show Cheng-Yau type
local gradient estimate of harmonic functions. By using the
Bishop-Gromov's volume comparison, the Poincar\'e inequality and the
Bochner inequality, we extend various optimal dimension estimates of
the spaces of polynomial growth harmonic functions on Riemannian
manifolds to a large class of nonsmooth mms satisfying the
$RCD^*(0,N)$ condition. To this extent, we provide a relatively
complete picture of global properties of harmonic functions on mms
with nonnegative Riemannian Ricci lower bound.

The paper is organized as follows: In Section~\ref{s:preliminaries}
we collect the basics of the analysis on mms with
Riemannian Ricci curvature bounds. Section~\ref{s:local Bochner} is
devoted to the proof of the local Bochner inequality, Theorem
\ref{t:Bochner and W12}. In Section~\ref{s:Cheng Yau gradient} we
 prove the Cheng-Yau type
gradient estimate on RCD spaces, Theorem \ref{t:Yau's gradient
estimate}. In the last section, we prove the optimal dimension
estimates of the spaces of polynomial growth harmonic functions and
linear growth harmonic functions, Theorem \ref{t:polynomial growth
harmonics} and Theorem \ref{t:linear growth harmonics} respectively.

\

\section{Preliminaries}\label{s:preliminaries}

We will only introduce some necessary notations and refer to \cite{Gigli2012,AGS2011a}
for proofs of the statements and further references.

Throughout the paper we assume $(X,d,m)$ is a metric measure spaces
and the measure $m$ is $\sigma$-finite and satisfying a maximum
growth bound, i.e. for some $C>0$
\[
|B_{r}(x)|\leq C\cdot e^{Cr^{2}},
\]
where $|B_r(x)|$ is an abbreviation for $m(B_r(x))$.
 Additionally assume that $(X,d)$ is a locally compact length space.
Both assumptions simplify the following statements. Since any $RCD$ mms will satisfy them they are in no way restrictive.

For the  subset
of  $L^{2}(X,m)$ containing all Lipschitz functions with compact support
one can define a (minimal) \textit{weak upper gradient} $|\nabla f|_{w}$ (see e.g.  \cite{AGS2011a}). The
\textit{Cheeger energy} $\Ch$ is defined by
\[
\Ch(f)=\int|\nabla f|_{w}^{2}dm.
\]
The subset of $L^{2}$-functions with finite Cheeger energy will be
denoted by $W^{1,2}(X,m)$. Equipped with the norm
$$\|u\|^2_{W^{1,2}}:=\|u\|^2_{L^2}+{\rm Ch}(u),$$ $W^{1,2}(X,m)$ is a Banach space. It can be shown that all Lipschitz
functions with compact support have weak upper gradients in
$L^{\infty}(X,m)$ and are contained in $W^{1,2}(X,m)$. Because the
weak upper gradient is a local object, there is also a well-defined
notation of $W_{\loc}^{1,2}(X,m)$: For any open set $\Omega\subset
X$, $W^{1,2}(\Omega)$ is the set of functions whose weak upper
gradient restricting to $\Omega$ have finite $L^2(\Omega)$ norm; The
set $W_{\loc}^{1,2}(\Omega)$ is defined as the set of functions
which belongs to $W^{1,2}(\Omega')$ for any precompact open set
$\Omega'\subset \Omega$. We denote by $\Li_c(\Omega)$ and
$W^{1,2}_c(\Omega)$ the set of functions in $\Li(\Omega)$ and
$W^{1,2}(\Omega)$ with compact support in $\Omega$ respectively.

It can be shown that the Cheeger energy $\Ch$ is convex and lower
semicontinuous. Thus the natural gradient flow $P_{t}:L^{2}\to
L^{2}$, called heat flow, can be defined. By the calculus developed
in \cite{AGS2011a} one can define a natural Laplace operator
$\Delta$ on a dense subset of $L^{2}$ as subdifferential of $\Ch$.
Furthermore, the following holds
\[
P_{t}\;\mbox{is linear}~\Longleftrightarrow~\Ch~\mbox{is a quadratic
form}~\Longleftrightarrow ~ \Delta \mbox{ is linear},
\]
A mms whose Cheeger energy is quadratic will be called \textit{infinitesimal
Hilbertian}.

Our main focus will be the following subset of infinitesimal Hilbertian
spaces. For this we also assume throughout that every function $f\in W^{1,2}(X,m)$ with
$|\nabla f|_w \le 1$ has a representative which is $1$-Lipschitz. Whenever this
holds for the space $(X,d,m)$ we say the space satisfies the 
 the Sobolev-to-Lipschitz property (see \cite{Ambrosio2012,Gigli2013a}). 
Indeed, without this condition the generalized gradient $|\nabla f|$ might 
be $0$ for all $L^2$-functions and hence the condition below trivially satisfied.

\begin{defn}[$RCD^*(K,N)$ mms]\label{RCD}
 We say an infinitesimal Hilbertian metric measure
spaces satisfying the Sobolev-to-Lipschitz property is a (finite-dimensional) 
$RCD^*(K,N)$ mms or satisfies the
$RCD^*(K,N)$ condition for some $K\in\mathbb{R}$ and $N>0$, if for
any $f\in W^{1,2}(X,m)$ and  we have $m$-a.e. in $X$,
\[
|\nabla P_{t}f|_{w}^{2}+\frac{4Kt^{2}}{N(e^{2Kt}-1)}|\Delta
P_{t}f|^{2}\le e^{-2Kt}P_{t}(|\nabla f|_{w}^{2}).
\]
\end{defn}

\begin{rem*}
By \cite[Theorem 7]{ErbarKuwadaSturm13} this is equivalent to the
more classical $CD^*(K,N)$ condition defined via Wasserstein geodesics
or the Bochner inequality (\ref{e:Bochner formula}) defined below.
Furthermore, any such space is a proper geodesic space, i.e. all
bounded set are precompact and between each two points there is a
rectifiable curve whose length is the distance of those points.
\end{rem*}

Because $\Ch$ is a quadratic form in $W^{1,2}(X,m)$,  there is an associated Dirichlet form $\mathcal{E}$, i.e.,
$\mathcal{E}: W^{1,2}(X,m)\times W^{1,2}(X,m)\to \mathbb{R}$ is the unique bilinear symmetric form satisfying $$\mathcal{E}(f,f)=\hbox{Ch}(f).$$

It was proven (see Section 4.3 in \cite{AGS2011}) that the Dirichlet
form can be written as
\[
\mathcal{E}(u,v)=\frac{1}{2}\int\langle\nabla u,\nabla v\rangle dm,\quad u,v\in W^{1,2}(X,m)
\]
where
\[
\langle\nabla u,\nabla v\rangle(x)=\lim_{\epsilon\to0}\frac{|\nabla(v+\epsilon u)|_{w}^{2}-|\nabla v|_{w}^{2}}{2\epsilon}.
\]
The notion $\langle\nabla u,\nabla v\rangle$ should be understood as a
bilinear and symmetric map from $W^{1,2}\times W^{1,2}\to L^{1}$.
We remark that $|\nabla u|_w^{2}=\langle\nabla u,\nabla u\rangle$ and $\langle\nabla u,\nabla v\rangle\in L^{1}$ are a well-defined objects whereas
$\nabla u$ is not. Furthermore, it is not difficult to show that under the assumptions above the Dirichlet form $\mathcal{E}$ is strongly local, closed and Markovian.

Gigli \cite[4.4,4.7]{Gigli2012} also showed that there is a dense subclass
denoted by $D(\mathcal{L})\subset W_{\loc}^{1,2}(X,m)$ that admit a measure-valued Laplacian, i.e. for every $u\in D(\mathcal{L})$ there is a Radon
measure denoted by $\mathcal{L}_u$ such that for all
$v\in \Li_c(X)$
\[
\mathcal{E}(v,u)=-\frac{1}{2}\int vd\mathcal{L}_{u}.
\]
If $\mathcal{L}_{u}$ has local $L^{p}$-density
w.r.t. $m$ we denote its density by $\Delta u$ and $\Delta u$ will be
 called the Laplacian of $u$.
The subset of functions admitting $L^p_\loc$-Laplacians will be
denoted by $\mathcal{D}_{L^p_\loc}(\Delta)$ (resp. $\mathcal{D}_{L^p}(\Delta)$ if $u\in W^{1,2}(X,m)$ and $\Delta u\in L^{p}(X,m)$). Note that the $L^2$-Laplacian agrees with the generator of the Dirichlet form $\mathcal{E}$.
\begin{defn}[Harmonic, subharmonic and superharmonic functions]\label{d:harmonic}
A function $u$ is called harmonic (subharmonic, superharmonic resp.)
on the domain $\Omega$ if $u\in W_{\loc}^{1,2}(\Omega)$ and
\[
-\int_{\Omega}\langle\nabla u,\nabla\varphi\rangle dm=0\ (\geq0,\leq0\ \mathrm{resp.})
\]
 for any $0\leq\varphi\in\Li_c(\Omega)$. 
 \end{defn}
\begin{rem*}
It turns out (sub-/super-)harmonicity implies $u\in D(\mathcal{L})$ (see \cite[Prop. 4.13]{Gigli2012}) so that it is equivalent to
\[
\mathcal{L}_{u}=0\ (\geq0,\leq0\ \mathrm{resp.}).
\]

\end{rem*}
By the definition of harmonic (subharmonic) functions and
integration by parts with cut-off functions, we obtain the following
Caccioppoli inequality. We omit the proof here.
\begin{lem}
[Caccioppoli inequality]\label{l:Caccio} Let $(X,d,m)$ satisfy
$RCD^*(K,N)$ condition. Then for any nonnegative subharmonic
function $u$ on $B_{2R},$ we have
\[
\int_{B_{R}}|\nabla u|_w^{2}dm\leq
\frac{C}{R^{2}}\int_{B_{2R}}u^{2}dm,
\]
 where $C=C(N).$
\end{lem}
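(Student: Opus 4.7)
The plan is to implement the classical Caccioppoli argument in the RCD calculus: test the subharmonicity against a quadratic cut-off multiplied by $u$ itself, and then absorb the resulting cross term using Young's inequality. Concretely, I would choose a Lipschitz cut-off $\eta \in \Li_c(B_{2R})$ with $0 \le \eta \le 1$, $\eta \equiv 1$ on $B_R$, and $|\nabla\eta|_w \le C/R$ $m$-a.e. (e.g., the standard truncation of $(2R - d(\cdot,p))/R$). Then I would take $\varphi = \eta^2 u$ as test function.

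Since $u$ is nonnegative and subharmonic on $B_{2R}$, Definition~\ref{d:harmonic} gives
\[
\int \langle \nabla u, \nabla(\eta^2 u)\rangle\, dm \le 0.
\]
Using the Leibniz rule for $\langle\nabla\cdot,\nabla\cdot\rangle$ in the infinitesimal Hilbertian setting,
\[
\int \eta^2 |\nabla u|^2\, dm \le -2\int \eta u\, \langle \nabla u, \nabla\eta\rangle\, dm \le 2\int \eta\, u\, |\nabla u|\, |\nabla\eta|\, dm.
\]
Applying Young's inequality $2ab \le \tfrac12 a^2 + 2 b^2$ with $a = \eta |\nabla u|$ and $b = u |\nabla\eta|$ and absorbing the first term on the left gives
\[
\tfrac{1}{2}\int \eta^2 |\nabla u|^2\, dm \le 2\int u^2 |\nabla\eta|^2\, dm \le \frac{C}{R^2}\int_{B_{2R}} u^2\, dm.
\]
Restricting the left-hand side to $B_R$ where $\eta \equiv 1$ yields the desired bound with a constant depending only on $N$ (through the choice of $\eta$, which in fact is independent of $N$).

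The only genuinely non-trivial point is admissibility of $\varphi = \eta^2 u$ as a test function: the definition in this paper uses $\Li_c(\Omega)$, while $\eta^2 u$ is only in $W^{1,2}_c(B_{2R})$. This is handled either by the standard density of $\Li_c$ in $W^{1,2}_c$ in the Cheeger setting, or equivalently by the characterization in terms of $\mathcal{L}_u \ge 0$ (cited in the remark after Definition~\ref{d:harmonic}), which extends the test-function class to all nonnegative $\varphi \in W^{1,2}_c(\Omega)$. The Leibniz rule $\nabla(\eta^2 u) = \eta^2 \nabla u + 2\eta u \nabla\eta$ inside $\langle \nabla u, \cdot\rangle$ and the pointwise Cauchy--Schwarz bound $\langle\nabla u,\nabla\eta\rangle \le |\nabla u|\,|\nabla\eta|$ are both part of Gigli's calculus recalled in Section~\ref{s:preliminaries}, so no further input is required.
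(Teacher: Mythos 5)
Your argument is correct and is precisely the "integration by parts with cut-off functions" proof that the paper alludes to while explicitly omitting the details: test $\mathcal{L}_u\ge 0$ against $\eta^2 u$, expand by the Leibniz rule, apply Cauchy--Schwarz and Young, and absorb. The only cosmetic point is that the cut-off should be supported compactly inside the open ball $B_{2R}$ (e.g.\ use $\bigl(\tfrac{3}{2}R-d(\cdot,p)\bigr)_+/(R/2)\wedge 1$ rather than the truncation of $(2R-d(\cdot,p))/R$, whose support is the closed ball), and the admissibility of the $W^{1,2}_c$ test function via $\mathcal{L}_u\ge0$ is handled exactly as you indicate.
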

We summarize the local calculus of the weak upper gradient and the
Laplace operator as follows.
\begin{thm}[\cite{Gigli2012,AGS2011a}]\label{t:basic properties of weak upper gradient} Assume $(X,d,m)$ is an infinitesimal Hilbertian mms. Assume $u,v,w\in W_{\loc}^{1,2}(X,m)$ Then the following holds:
\begin{enumerate}
\item $|\nabla u|_{w}=|\nabla\tilde{u}|_{w}\quad m\mbox{-a.e. on }\{u=\tilde{u}\}$
and $|\nabla u|_{w}=0$ $m$\textup{-a.e. on $\{u=c\}$ }for
$c\in\mathbb{R}.$
\item $u\in W_{\loc}^{1,2}(X,m)$ iff $u\cdot\chi\in W^{1,2}(X,m)$ for all
$\chi\in\operatorname{Lip}_{c}$. Moreover, if $u$ has compact
support then it is in $W^{1,2}(X,m).$
\item Assume $u\in\mathcal{D}_{\loc}(\Delta).$ The Laplace operator is a
local object, i.e. if $\Omega\subset X$ is open and
$\{\Omega_{i}\}_{i\in I}$ an open covering of $\Omega$ then

\begin{enumerate}
\item $\mathcal{L}_{u|\Omega}=\mu$ iff for all $v\in \Li_c(\Omega)$
\[
\mathcal{E}(v,u)=-\frac{1}{2}\int vd\mu,
\]

\item set $\mathcal{L}_{u|\Omega_{i}}=\mu_{i}$; if $\mu_{i|\Omega_{i}\cap\Omega_{j}}=\mu_{j|\Omega_{i}\cap\Omega_{j}}$
whenever $\Omega_{i}\cap\Omega_{j}\neq\varnothing$ then
\[
(\mathcal{\mathcal{L}}_{u|\Omega})_{\Omega\cap\Omega_{i}}=\mu_{i}.
\]

\end{enumerate}
\item $|\nabla\cdot|_{w}$ and $\Delta$ satisfy the chain rule, i.e. if
$\phi:\mathbb{R}\to\mathbb{R}$ is Lipschitz then
\[
|\nabla\phi(u)|_{w}=|\phi'(u)||\nabla u|_{w}\quad m\mbox{-a.e.}
\]
and if, in addition, $u\in \mathcal{D}_{L^p_\loc}(\Delta)$ then
\[
\Delta\phi(u)=\phi'(u)\Delta u+\phi''(u)|\nabla u|_{w}^{2}\quad m\mbox{-a.e.}
\]

\item The inner product $\langle\nabla\cdot,\nabla\cdot\rangle$ and $\Delta$
satisfy the Leibniz rule, i.e. if $u,v\in W_{\loc}^{1,2}\cap
L_{\loc}^{\infty}(X,m)$ then
\[
\langle\nabla(u\cdot v),\nabla w\rangle=v\langle\nabla u,\nabla w\rangle+u\langle\nabla v,\nabla w\rangle\quad m\mbox{-a.e.}
\]
and if, in addition $u,v \in \mathcal{D}_{L^1}(\Delta)$, then
\[
\Delta u\cdot v = v\cdot\Delta u+u\cdot\Delta v+2\langle\nabla u,\nabla v\rangle
\]
so that $u\cdot v \in \mathcal{D}_{L^1}(\Delta)$.

\item The Cauchy-Schwarz inequality holds, $$|\langle\nabla u,\nabla v\rangle|\leq |\nabla u|_w|\nabla v|_w.$$
\end{enumerate}

\end{thm}

\begin{proof}
This follows directly from \cite{Gigli2012}. More precisely, for $(1)$ see \cite[Equation (2.15)]{Gigli2012}, for $(2)$ see \cite[2.6-(i)]{Gigli2012}, for $(3)$ see \cite[3.15]{Gigli2012} and for $W^{1,2}$-functions $(4)$ and $(5)$ are proven in \cite[3.15,3.17]{Gigli2012}. The general cases then hold by approximation and using test functions with compact support.
Finally fact $(6)$ is a consequence of sublinearity of $f\mapsto |\nabla f|$.
Alternatively the results follow from Dirichlet theory (see \cite{Fukushima1994}).
\end{proof}
Directly from the chain rule we obtain.
\begin{cor}
Assume $u\in W^{1,2}(X,m)$ is continuous and harmonic. Then for any convex $C^2$-function $\phi:\mathbb{R} \to \mathbb{R}$ 
the funtion $\phi(u)$ is subharmonic.
\end{cor}



From calculus rules above one easily sees that $W^{1,2}(X,m)\cap L^{\infty}(X,m)$ and its local version are algebras, i.e. if $u,v\in W^{1,2}(X,m)\cap L^{\infty}(X,m)$ then $u\cdot v\in W^{1,2}(X,m)\cap
L^{\infty}(X,m)$. Similarly, $\mathcal{D}_{L^p_\loc}(\Delta)\cap \Lip_\loc(X,d)$  is an algebra.

The following was proved in \cite{AMS14}. It will be a central part in showing that cut-off function behave nicely which will then be use
to prove Theorem \ref{t:Bochner and W12}.
\begin{lem}[{\cite[Theorem 5.5]{AMS14}}] \label{lem:AMS}
Assume $(X,d,m)$ is an $RCD^*(K,N)$-space. Then $u\in D_{L^4}(\Delta)\cap L^4(X,m)$ implies
$|\nabla u|^2_w \in W^{1,2}(X,m)$.
\end{lem}

Now we construct the cut-off functions.

\begin{prop}
Let $(X,d,m)$ be an $RCD^*(K,N)$ space and $K$ any compact subset in $X$.
There is a compactly supported Lipschitz function
$\Psi=\Psi_K:X\to\mathbb{R}$ with
$\Psi \in \mathcal{D}_{L^\infty}(\Delta)$
such that $\Delta\Psi\in W^{1,2}_c(X,d,m)$
and $\Psi$ equals $1$ in a neighborhood of $K$.
\end{prop}
\begin{proof}
This follows essentially from \cite[Lemma 6.7]{AMS14}. We give a short
argument: Let $\phi$ be any compactly supported Lipschitz which equals
$1$ in a neighborhood of $K$.
Let $\phi_\epsilon = P_\epsilon \phi$. By the gradient estimate the functions
${\phi_\epsilon}$,${\epsilon\in(0,1]}$, are Lipschitz functions with constant bounded by
$\max\{e^{-K},1\}\Lip(\phi)$.

By the smoothing properties of the heat flow we have
$\phi_\epsilon \in \mathcal{D}_{L^2}(\Delta)$. Linearity of the heat flow
implies $\Delta(\phi_\epsilon)$ is in the image of
$P_{\frac{\epsilon}{2}}$ so that $\Delta \phi_\epsilon \in \Lip(X,d)\cap L^\infty(X,m)$ and hence $\phi_\epsilon \in D_{L^\infty}(\Delta)$ with $\Delta\phi \in W^{1,2}(X,m)$.

As $\phi_\epsilon \to \phi$ in $L^2$. The Arzel\`a-Ascoli theorem implies
that this convergence is actually locally uniformly. Using the
Gaussian estimate on the heat kernel one can show that the convergence is
globally uniformly. In particular, there are open neighborhoods
$U_1\subset U_2$ of $K$ such that $\phi_\epsilon \ge 1-\delta$ on $U_1$
and $\phi_\epsilon \le \delta$ outside of $U_2$ for any
$\epsilon<\epsilon_0$.

Now let $\eta:\mathbf{R}\to \mathbf{R}$ be a smooth cut-off function
which equals $1$ for $r\ge 1-\delta$ and $0$ for $r \le \delta$.
We claim that $\eta(\phi_\epsilon)$ for some $\epsilon < \epsilon_0$
is the required cut-off functions. Indeed, by
assumption it is equal to $1$ in a neighborhood of $K$ and has compact support.
By chain rule we have
\[
\Delta \eta(\phi_\epsilon) = \eta'(\phi_\epsilon)\Delta \phi_\epsilon+\eta''(\phi_\epsilon)|\nabla \phi_\epsilon|_{w}^{2}.
\]

Note $|\nabla \phi_\epsilon|_{w}^{2}$ that is bounded and by 
Lemma \ref{lem:AMS} in $W^{1,2}(X,m)$. Therefore,
$$\Delta \eta(\phi_\epsilon) \in L^\infty(X,m)\cap W^{1,2}(X,m).$$
\end{proof}

Denote by $\cutoff$ the set of Lipschitz functions with bounded support and $W^{1,2}\cap L^\infty$-Laplacian, i.e. $\varphi \in \cutoff$ if $\varphi \in D_{L^\infty}(\Delta)\cap \Lip_c$ with $\Delta \varphi \in W^{1,2}(X,m)$.

\begin{pro}\label{proX}
Assume $(X,d,m)$ is an $RCD^*(K,N)$-space and
$u \in \mathcal{D}_{L^4_\loc}(\Delta)\cap \Lip_\loc(X,d)$
then the following is true for every $\psi \in \cutoff$
\begin{enumerate}
\item $\psi u \in \mathcal{D}_{L^4}(\Delta)$. In particular, $|\nabla (\psi u)|_w^2 \in W^{1,2}(X,m)$
\item $\langle\nabla\psi,\nabla u\rangle\in W^{1,2}(X,m)$
\end{enumerate}
\end{pro}

Before proving this proposition, we state the following corollary which follows from the last point together with Leibniz rule.
\begin{cor} \label{cor:cutoff-regularity}
Assume $(X,d,m)$ is an $RCD^*(K,N)$-space and
$u\in \mathcal{D}_{L^4_\loc}(\Delta)\cap \Lip_\loc(X,d)$ with
$\Delta u\in W^{1,2}_\loc(X,m)$
then $\Delta(\psi u)\in W^{1,2}(X,m)$ for all $\psi \in \cutoff$.
\end{cor}

\begin{proof}[Proof of Proposition \ref{proX}]
The first item follows from Leibniz rule together with the previous lemma. Indeed, Leibniz rule implies
\[
\Delta(\psi u)= \psi \Delta u+ u \Delta \psi + 2 \langle \nabla \psi,\nabla u \rangle.
\]
As $u,\psi \in \Lip_\loc \cap \mathcal{D}_{L^4_\loc}(\Delta)$ and $\psi u$ has bounded support we see that $\Delta(\psi u) \in L^4(X,d)$. By Lemma
\ref{lem:AMS} above this implies $|\nabla (\psi u)|_w^2\in W^{1,2}(X,d)$.

For the second fact choose $\Phi\in\cutoff$ which equals $1$ in a neighborhood $U$ of the support of $\psi$. Because $\Phi \equiv 1$ in $U$,
 it holds
\[
4\langle \nabla \psi, \nabla u\rangle = |\nabla \Phi(u+\psi)|_w^2 -|\nabla \Phi(u-\psi)|_w^2 \mbox{ in } U
\]
with support in the interior of $U$.

From the first fact $|\nabla \Phi(u \pm \psi)|_w^2 \in W^{1,2}(X,m)$ so that $v=|\nabla \Phi(u+\psi)|_w^2 -|\nabla \Phi(u-\psi)|_w^2 \in W^{1,2}(\bar U, m)$ with support strictly in the interior of $U$. But then $\tilde v \in W^{1,2}(X,m)$ where $\tilde v$ is the trivial extension of $v$ outside of $U$. But $\tilde v = 4\langle \nabla \psi, \nabla u\rangle$ almost everywhere.
\end{proof}

Next, we recall some basic geometric properties of $RCD^*(K,N)$
mms. The following volume comparison theorem is well-known and follows
from the $RCD^*(K,N)$ (\cite[Theorem 2.3]{Sturm06b}, compare also \cite[Remark 3.5]{ErbarKuwadaSturm13}).

\begin{thm}[Bishop-Gromov volume comparison]\label{t:BishopGromov}
Let $(X,d,m)$ be an $RCD^{*}(K,N)$ space with $K\leq 0$ and $N\in [1,\infty)$. Then for any
$0<r<R<\infty,$
\begin{equation}\label{e:BishopGromov}
\frac{|B_R(x)|}{|B_r(x)|}\leq \left\{
\begin{array}{lll}\frac{\int_0^R \sinh^{N-1} \left(\sqrt{\frac{-K}{N-1}}t\right)}{\int_0^r \sinh^{N-1} \left(\sqrt{\frac{-K}{N-1}}t\right)},& K<0, 1<N<\infty,

\\ \left(\frac{R}{r}\right)^N,& K=0
\\ \frac{R}{r}, &K\leq 0, N=1. 
\end{array}
\right.
\end{equation}
\end{thm}

In order to prove Cheng-Yau's local gradient estimate for the space satisfying
$RCD^*(K,N)$ condition with $K<0,$ we need the Poincar\'e and
Sobolev inequalities with precise dependence on $K$ and radii of the
balls.

The Poincar\'e inequality was proved by several authors, see
Lott-Villani \cite{LottVillani07}, von Renesse \cite{Renesse08},
Rajala  \cite[Theorem~1.1]{RajalaJFA12}, \cite{RajalaCV12} for mms satisfying $RCD^*(K,N)$. A priori, those results need non-branching
geodesics. However, the same proofs also work for so-called
essentially non-branching spaces which is known to hold under
the $RCD^*(K,N)$-condition \cite{AmbrosioGigliMondinoRajala12}.

\begin{thm}[Poincar\'e inequalty]\label{t:Poincare}
Let $(X,d,m)$ be an $RCD^*(K,N)$ mms for $K\leq0$ and $N\in [1,\infty)$. For any
$1\leq p<\infty,$ there exists a constant $C=C(N,p)$ such that for
all $u\in W^{1,2}(B_{2R}),$
\begin{equation}\label{e:Poincare CD(K,N)}
\int_{B_R}|u-u_{B_R}|^pdm \leq CR^pe^{C\sqrt{-K}
R}\int_{B_{2R}}|\nabla u|_w^p dm,
\end{equation} where $u_{B_R}=\frac{1}{|B_R|}\int_{B_R}u dm.$
\end{thm}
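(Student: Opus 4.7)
I would follow the optimal-transport approach developed in \cite{LottVillani07, Renesse08, RajalaJFA12, RajalaCV12}, in which the $CD(K,N)$ condition directly yields the weighted $L^p$ Poincar\'e inequality with the correct dependence on $K$ and $R$. The strategy is to bound $|u(x)-u(y)|^p$ by a one-dimensional gradient integral along a minimizing geodesic, then to integrate in $(x,y) \in B_R \times B_R$ using a change-of-variables whose Jacobian is controlled by the $CD(K,N)$ distortion coefficients.

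The starting point is the pointwise telescoping bound $|u(x)-u(y)| \le d(x,y) \int_0^1 |\nabla u|(\gamma_{x,y}(t))\, dt$ for any Lipschitz $u$ and any constant-speed minimizing geodesic $\gamma_{x,y}$ joining $x$ to $y$. Raising to the $p$-th power, using Jensen on $[0,1]$, and $d(x,y) < 2R$ for $x,y \in B_R$,
$$|u(x)-u(y)|^p \le (2R)^p \int_0^1 |\nabla u|^p(\gamma_{x,y}(t))\, dt.$$
Integrating in $(x,y)$ over $B_R \times B_R$ and Fubini reduce the proof to: for each fixed $t \in (0,1)$, bound the push-forward of $m|_{B_R}\otimes m|_{B_R}$ under the midpoint map $M_t(x,y):=\gamma_{x,y}(t)$ by a controlled density times $m$.

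This is where $CD(K,N)$ enters. Taking $\mu_0 = \mu_1 = m(B_R)^{-1} m|_{B_R}$ and an optimal $W_2$-geodesic $(\mu_t)_{t \in [0,1]}$, displacement convexity of the R\'enyi entropy in the $CD(K,N)$ formulation yields a pointwise density bound for $\mu_t$ in terms of $\tau_{K,N}^{(1-t)}(d(x,y))^N$ and $\tau_{K,N}^{(t)}(d(x,y))^N$ against $m$. For $K<0$, the explicit expression $\tau_{K,N}^{(s)}(\theta) = s^{1/N}\bigl(\sinh(s\theta\sqrt{-K/(N-1)})/\sinh(\theta\sqrt{-K/(N-1)})\bigr)^{(N-1)/N}$ shows that on $B_{2R}$ both distortion factors are dominated by $C(N) e^{C\sqrt{-K}R}$. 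Since the image $M_t(B_R \times B_R)$ lies in $B_{2R}$, combined with Bishop--Gromov doubling (Theorem~\ref{t:BishopGromov}) this produces
$$\int_{B_R\times B_R} |\nabla u|^p(M_t(x,y))\, dm(x)dm(y) \le C(N,p) e^{Cp\sqrt{-K}R} m(B_R) \int_{B_{2R}} |\nabla u|^p\, dm$$
uniformly in $t$. Integrating in $t$, and then using Jensen in the form
$$\int_{B_R} |u - u_{B_R}|^p\, dm \le \frac{1}{m(B_R)} \int_{B_R \times B_R} |u(x)-u(y)|^p\, dm(x)dm(y),$$
together with absorption of constants, recovers~(\ref{e:Poincare CD(K,N)}) for Lipschitz $u$; extending to $u \in W^{1,2}(B_{2R})$ is a routine approximation using lower semicontinuity of the Cheeger energy.

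The main technical obstacle is that in a generic $CD(K,N)$ space the Wasserstein geodesic $(\mu_t)$ need not be unique and individual geodesics $\gamma_{x,y}$ may branch, so the pointwise midpoint map $M_t$ and the change of variables need careful justification. This is precisely the difficulty resolved by Rajala in \cite{RajalaJFA12,RajalaCV12}, who constructs good dynamical optimal couplings and exploits measure-contraction-type estimates that hold in the full $CD(K,N)$ class; in the $RCD^*$ framework of interest to the present paper, essentially non-branching is automatic, so the change of variables above is unambiguous and the argument simplifies.
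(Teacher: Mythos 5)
The paper does not prove this theorem at all: it is quoted directly from the literature, with the proof attributed to Lott--Villani \cite{LottVillani07}, von Renesse \cite{Renesse08} and Rajala \cite{RajalaJFA12,RajalaCV12}. Your sketch is a faithful reconstruction of exactly the argument in those references (in particular Rajala's), including the one genuinely delicate point --- that branching of geodesics in a general $CD(K,N)$ space obstructs a naive midpoint-map change of variables, which is resolved by Rajala's good dynamical plans with bounded compression --- so there is nothing to compare against on the paper's side; the only cosmetic slip is that what you need from the distortion coefficients is a \emph{lower} bound $\tau_{K,N}^{(s)}(\theta)^{N}\geq c(N)\,s\,e^{-C(N)\sqrt{-K}\,R}$ for $\theta\leq 2R$ (so that the intermediate densities are bounded \emph{above} by $C(N)e^{C\sqrt{-K}R}$ times the endpoint densities), not an upper bound on the coefficients themselves.
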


\begin{proof}
The references only prove the result for upper gradients.
However, one obtains the above via the following argument:
The local Lipschitz is an upper gradient so the $(p,p)$-Poincar\'e inequality holds under the $RCD^*(K,N)$-condition.
This implies that for Lipschitz functions the local Lipschitz constant is (almost everywhere) equal to the $p$-weak upper gradient. By approximation we see that the Poincar\'e inequality holds also for the $p$-weak upper gradient.
In addition, one can show that the $p$-weak upper gradient for any $p>1$, is independent of $p$ and is therefore equal to the $2$-weak upper gradient which we denoted by $|\nabla \cdot |_w$. See also \cite{Gigli2014a} for a more direct proof of this fact in case the weaker condition $RCD^*(K,\infty)$ holds.
The case $p=1$ follows by a limiting argument.
\end{proof}


\

\section{A local Bochner inequality}\label{s:local Bochner}

In the rest of the paper, we assume throughout that $(X,d,m)$ is an
$RCD^*(K,N)$ mms for $K\leq 0$ and $N\in [1,\infty)$

The following lemma was proven in \cite{ErbarKuwadaSturm13}. We will
include a more technical statement of Savar\'e \cite{Savare2013}.

\begin{lem}[Bochner formula, {\cite[Theorem 5]{ErbarKuwadaSturm13}}, {\cite[Lemma 3.2]{Savare2013}}]\label{l:Bochner formula}Let $(X,d,m)$ be an $RCD^*(K,N)$ mms. For any $u\in D(\Delta)$ with $\Delta u\in
W^{1,2}(X,d,m)$ and all bounded and nonnegative $g\in D(\Delta)$
with $\Delta g\in L^{\infty}(X,m)$ we have
\begin{equation}
\frac{1}{2}\int\Delta g|\nabla u|_w^{2}dm\geq\frac{1}{N}\int g(\Delta u)^{2}dm+\int g\langle\nabla u,\nabla(\Delta u)\rangle dm+K\int g|\nabla u|_w^{2}dm.\label{e:Bochner formula}
\end{equation}

Furthermore, if $u\in\operatorname{Lip}(X)\cap L^{\infty}(X)$ then
$|\nabla u|_w^{2}\in W^{1,2}(X,d,m)\cap L^{\infty}(X,m)$ and
\[
\mathcal{L}_{|\nabla u|_w^{2}}\ge2\left(\frac{(\Delta u)^{2}}{N}dm+\langle\nabla u,\nabla(\Delta u)\rangle dm+K|\nabla u|_w^{2}dm\right).
\]

\end{lem}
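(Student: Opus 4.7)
The plan is to differentiate the Bakry--\'Emery form of the $\RCD^*(K,N)$ condition at $t=0^+$ after integrating against the nonnegative test function $g$. Setting
$$\Phi(t) := \int g\bigl[e^{2Kt}|\nabla P_t u|_w^2 - P_t(|\nabla u|_w^2)\bigr]\,dm + \frac{4Kt^2 e^{2Kt}}{N(e^{2Kt}-1)}\int g\,|\Delta P_t u|^2\,dm,$$
Definition~\ref{RCD} gives $\Phi(t)\le 0$ for $t>0$ and $\Phi(0)=0$, so $\limsup_{t\downarrow 0}\Phi(t)/t \le 0$. I would then compute the three time derivatives at $t=0$: (i) $\tfrac{d}{dt}\bigl|_{t=0}\int g\,e^{2Kt}|\nabla P_t u|_w^2\,dm = 2K\int g|\nabla u|_w^2\,dm + 2\int g\langle\nabla u,\nabla\Delta u\rangle\,dm$, using the chain rule and the hypothesis $\Delta u\in W^{1,2}$; (ii) $\tfrac{d}{dt}\bigl|_{t=0}\int g\,P_t(|\nabla u|_w^2)\,dm = \int \Delta g\,|\nabla u|_w^2\,dm$, obtained by shifting the heat flow onto $g$ via the self-adjointness identity $\int g\,P_t h\,dm=\int P_t g\,h\,dm$ with $h=|\nabla u|_w^2\in L^1$ and then using $g\in D(\Delta)$ with $\Delta g\in L^\infty$; and (iii) $\lim_{t\downarrow 0}\tfrac{4Kt e^{2Kt}}{N(e^{2Kt}-1)}=\tfrac{2}{N}$, contributing $\tfrac{2}{N}\int g(\Delta u)^2\,dm$. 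Collecting terms and dividing by two yields the integrated Bochner inequality.

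For the ``furthermore'' assertion, assume $u\in\Li(X)\cap L^\infty(X)$, so $|\nabla u|_w^2\in L^\infty(X,m)$ is automatic but $u$ need not lie in $D(\Delta)$. I would regularize through the heat semigroup, $u_\epsilon := P_\epsilon u$, which satisfies $u_\epsilon\in D(\Delta)$ with $\Delta u_\epsilon\in W^{1,2}$ and $u_\epsilon\to u$ in $W^{1,2}$ as $\epsilon\downarrow 0$, so the first part applies to each $u_\epsilon$. Testing against $g=\varphi^2$ for $\varphi\in\Li_c(X)$ and manipulating the cross terms with Cauchy--Schwarz and Young's inequality to absorb mixed gradient factors should produce a uniform-in-$\epsilon$ bound on $\int \varphi^2\bigl|\nabla|\nabla u_\epsilon|_w^2\bigr|^2\,dm$. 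Weak compactness in $W^{1,2}_{\loc}$, together with the $m$-a.e.\ convergence $|\nabla u_\epsilon|_w^2\to |\nabla u|_w^2$ coming from the Bakry--\'Emery contraction estimate, then identifies the limit and gives $|\nabla u|_w^2\in W^{1,2}(X,d,m)\cap L^\infty(X,m)$. With this regularity in place, the integrated inequality, read against an arbitrary admissible $g$, directly realizes $\mathcal{L}_{|\nabla u|_w^2}$ as a signed Radon measure bounded below as claimed, via the characterization of the measure-valued Laplacian recalled in Section~\ref{s:preliminaries}.

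The step I expect to be most delicate is the regularization in the Lipschitz case: one cannot pass to the limit pointwise in the nonlinear expression $\langle\nabla u_\epsilon,\nabla\Delta u_\epsilon\rangle$, so the argument must remain inside the integrated identity and exploit the interplay between $W^{1,2}$-convergence of $u_\epsilon$ and weak $L^2$-convergence of $\Delta u_\epsilon$ tested against fixed smooth functions. A secondary technical point is justifying the differentiation in (ii), since $|\nabla u|_w^2$ is not assumed to lie in $D(\Delta)$; this forces one to move the time derivative onto $g$ by self-adjointness rather than differentiating $P_t(|\nabla u|_w^2)$ directly, which is the source of the appearance of $\Delta g$ (rather than a would-be $\Delta|\nabla u|_w^2$) on the left-hand side of the final inequality.
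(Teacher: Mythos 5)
The paper does not prove this lemma at all: it is quoted verbatim from Erbar--Kuwada--Sturm and Savar\'e, so there is no internal proof to compare against. Judged on its own merits, the first half of your argument is the standard (and correct) derivation of the integrated $BE(K,N)$ inequality from the pointwise gradient estimate of Definition~\ref{RCD}: the function $\Phi$ satisfies $\Phi(0)=0$, $\Phi(t)\le 0$, and your three limits (i)--(iii) are the right ones; modulo the routine justification that $t\mapsto \int g|\nabla P_t u|^2\,dm$ is right-differentiable at $0$ (which uses $\Delta P_t u=P_t\Delta u\to\Delta u$ in $W^{1,2}$), this part is sound. Note also a misreading: the ``furthermore'' clause adds the hypothesis $u\in\Li\cap L^\infty$ to the standing assumptions $u\in D(\Delta)$, $\Delta u\in W^{1,2}$ (otherwise $\langle\nabla u,\nabla\Delta u\rangle$ in its conclusion would be meaningless), so no regularization $u_\epsilon=P_\epsilon u$ is needed and the first part already applies to $u$ itself.

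The genuine gap is in how you obtain $|\nabla u|^2\in W^{1,2}$. Your plan is to extract a uniform bound on $\int\varphi^2\bigl|\nabla|\nabla u_\epsilon|^2\bigr|^2dm$ from the integrated Bochner inequality by Cauchy--Schwarz/Young absorption, but there is nothing to absorb into: inequality \eqref{e:Bochner formula} contains no coercive term in $\nabla|\nabla u|^2$ (no Hessian or $|\nabla|\nabla u||^2$ term), and the Caccioppoli-type test function $\varphi^2|\nabla u_\epsilon|^2$ is only admissible once $|\nabla u_\epsilon|^2\in W^{1,2}_{\loc}$ is already known --- which is precisely the assertion being proved, and is no easier for $P_\epsilon u$ than for $u$. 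The argument that actually works (Savar\'e's) mollifies $f=|\nabla u|^2$ rather than $u$: writing the first part as $\tfrac12\int\Delta g\,f\,dm\ge\int g\,h\,dm$ with $h:=2\bigl(\tfrac{(\Delta u)^2}{N}+\langle\nabla u,\nabla\Delta u\rangle+K|\nabla u|^2\bigr)\in L^1$, one deduces $\Delta P_t f\ge P_t h$, and then, using $f\ge 0$ and $f\in L^\infty$,
\begin{equation*}
\Ch(P_t f)=-\int P_t f\,\Delta P_t f\,dm\le\int P_t f\,(\Delta P_t f)^-dm\le\|f\|_{L^\infty}\|h^-\|_{L^1},
\end{equation*}
uniformly in $t$; lower semicontinuity of $\Ch$ along $P_t f\to f$ in $L^2$ then gives $f\in W^{1,2}$. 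Once that regularity is in hand, \eqref{e:Bochner formula} says the distribution $\mathcal{L}_{|\nabla u|^2}-h\,dm$ is nonnegative, hence a measure, which is the claimed conclusion. Without this (or an equivalent) step, your compactness argument does not get off the ground.
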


Before proving the local version of Bochner inequality, Theorem \ref{t:Bochner and W12}, we point out that every function
with Laplacian in $L^{p}(X,m)$ for $p>N$ is locally Lipschitz continuous.
This was first proven for Ahlfors regular spaces in \cite{Koskela2003,Jiang2011}
but also holds in our setting (see \cite{Jiang2013} for the $L^{\infty}$
case and more general spaces and \cite{Kell2013} for necessary adjustments
for the case $p>N$ in our setting).
\begin{lem}
[Lipschitz regularity
\cite{Koskela2003,Jiang2011},\cite{Jiang2013,Kell2013}]\label{l:Lipschitz
regularity} Let $(X,d,m)$ be an $RCD^*(K,N)$ mms. Any $u \in
W_{\loc}^{1,2}(X,d,m)$ with $\Delta u\in L_{\loc}^{p}(X,d,m)$ and
$p>N$ is locally Lipschitz continuous.
\end{lem}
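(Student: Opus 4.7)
This lemma is the metric analogue of Morrey's embedding $W^{2,p}\hookrightarrow C^{0,1}$ for $p>N$ on Riemannian manifolds of dimension $N$, and is proved in the cited works by combining the Poincar\'e inequality (Theorem \ref{t:Poincare}), the volume doubling from Bishop--Gromov (Theorem \ref{t:BishopGromov}), and the heat-semigroup Bakry--\'Emery calculus encoded in the $\RCD^{*}(K,N)$ condition. I would follow the Koskela--Rajala--Shanmugalingam blueprint together with the heat-semigroup adaptations in \cite{Jiang2013,Kell2013}.

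The first step is localization and a pointwise oscillation bound. Fixing a precompact open set $V$ where we wish to establish Lipschitz regularity, and truncating $u$ by a good Lipschitz cut-off, we reduce to the case $u\in W^{1,2}(X,m)$ compactly supported with $\Delta u=f\in L^{p}(X,m)$. A standard telescoping argument built on the $(1,2)$-Poincar\'e inequality and volume doubling yields, for Lebesgue points $x,y\in V$,
\[
|u(x)-u(y)|\leq C\,d(x,y)\,\bigl(\mathcal{M}|\nabla u|_{w}(x)+\mathcal{M}|\nabla u|_{w}(y)\bigr),
\]
where $\mathcal{M}$ is a localized Hardy--Littlewood maximal function. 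Hence it suffices to show $|\nabla u|_{w}\in L^{\infty}_{\loc}$.

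The second step is the $L^{\infty}$-gradient bound via the Duhamel formula $u=P_{t}u-\int_{0}^{t}P_{s}f\,ds$, combined with two ingredients: (i)~the gradient regularization estimate $|\nabla P_{s}g|_{w}^{2}\leq Cs^{-1}P_{s}(g^{2})$ (valid for small $s$, a consequence of the Bakry $\Gamma_{2}$-calculus intrinsic to $\RCD^{*}(K,N)$), and (ii)~the ultracontractive heat-kernel bound $\|P_{s}\|_{L^{q}\to L^{\infty}}\leq Cs^{-N/(2q)}$ (a consequence of doubling and Poincar\'e via Sturm's general framework). Applying (ii) with $q=p/2$ and substituting into (i) yields
\[
|\nabla P_{s}f|_{w}\leq Cs^{-1/2-N/(2p)}\|f\|_{L^{p}},\qquad |\nabla P_{t}u|_{w}\leq Ct^{-1/2}\|u\|_{L^{\infty}}.
\]
Since $p>N$ is precisely the threshold making $\int_{0}^{t}s^{-1/2-N/(2p)}\,ds$ finite, optimizing in $t$ yields $\||\nabla u|_{w}\|_{L^{\infty}(V)}<\infty$; combined with the oscillation bound, this gives the local Lipschitz continuity of $u$ on $V$.

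The principal obstacle is establishing the gradient estimate in a form strong enough to produce a pointwise bound on $|\nabla u|_{w}$: one must pass from the $L^{2}$-statement of Bakry--\'Emery to an $L^{p}\to L^{\infty}$ gradient estimate on $P_{s}$, which relies on the Gaussian-type heat-kernel upper bounds for $\RCD^{*}(K,N)$ spaces. A secondary nuisance specific to the $\sigma$-finite setting is tracking the dependence of constants on the underlying ball and on the curvature parameter $K$. The threshold $p>N$ is sharp, matching the smooth Morrey-embedding threshold exactly.
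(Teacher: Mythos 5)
The paper offers no proof of this lemma at all --- it is imported from \cite{Koskela2003,Jiang2011,Jiang2013,Kell2013} --- so the only comparison available is with the method of those references, and your plan reproduces it faithfully: a telescoping/maximal-function argument from the $(1,2)$-Poincar\'e inequality and Bishop--Gromov doubling reduces the claim to $|\nabla u|_{w}\in L^{\infty}_{\loc}$, and that bound comes from Duhamel's formula $u=P_{t}u-\int_{0}^{t}P_{s}f\,ds$ combined with the Bakry--Ledoux regularization $|\nabla P_{s}g|_{w}^{2}\leq C s^{-1}P_{s}(g^{2})$ and local ultracontractivity, with $p>N$ entering exactly as the integrability threshold for $s^{-1/2-N/(2p)}$ near $s=0$. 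Your identification of the main analytic ingredients and of the sharpness of the exponent is correct.

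Two points deserve more care than your sketch gives them. First, the localization: for a cut-off $\chi$ one has $\Delta(\chi u)=\chi\Delta u+u\Delta\chi+2\langle\nabla\chi,\nabla u\rangle$, and the cross term is a priori only in $L^{2}$, so the truncated function does \emph{not} obviously satisfy the hypotheses of the global semigroup argument with an $L^{p}$ right-hand side; the cited proofs deal with this either by a preliminary self-improvement of the integrability of $|\nabla u|_{w}$ or by exploiting that the bad terms live in an annulus away from where the estimate is needed, using the Gaussian localization of $P_{s}$ for small $s$ (this is precisely the content of the ``necessary adjustments'' in \cite{Kell2013}). Second, the estimate $|\nabla P_{s}g|_{w}^{2}\leq Cs^{-1}P_{s}(g^{2})$ is not the inequality defining $\RCD^{*}(K,N)$ in this paper (which bounds $|\nabla P_{t}f|_{w}^{2}$ by $e^{-2Kt}P_{t}(|\nabla f|_{w}^{2})$); it must be derived from the Bakry--\'Emery condition by the standard interpolation along $s\mapsto P_{s}\bigl((P_{t-s}g)^{2}\bigr)$, which is available on $\RCD(K,\infty)$ spaces but should be stated as a separate ingredient rather than taken as given.
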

\begin{rem}
Note that \cite{Jiang2013,Kell2013} assume global $N$-doubling which only holds in our setting if $\inf_{x\in X} m(B_{R_0}(x))>0$. However, if one applies a version of the heat kernel comparison as in \cite[Equation (2.2)]{Jiang2013}, $N$-doubling only needs to hold local uniformly.
\end{rem}

Now we are ready to prove Theorem \ref{t:Bochner and W12}.

\begin{proof}[Proof of Theorem \ref{t:Bochner and W12}]
It suffices to show that the Bochner inequality holds locally, that
is it holds for all $\varphi\in W^{1,2}_c(X,d,m)$.

Let $U=U_\varphi$ be a neighborhood of $\supp \varphi$ and $\Psi\in\cutoff$ such that $\Psi \equiv 1$ in $U$.
By Lemma \ref{l:Lipschitz regularity} we know that $u$ is locally Lipschitz continuous.  Thus we can apply Corollary \ref{cor:cutoff-regularity} to
get $\tilde{u} := \Psi \cdot u \in \mathcal{D}_{L^p}(\Delta)$ with
$\Delta\tilde{u} \in W^{1,2}(X,m)$. Hence the Bochner
inequality holds for $\tilde u$. In particular,

\begin{eqnarray*}
\int\langle\nabla\varphi,\nabla|\nabla u|_w^{2}\rangle dm & = & {\displaystyle \int\varphi d\mathcal{L}_{|\nabla u|_w^{2}}}\\
 & \ge & 2\Big({\displaystyle \int\varphi\frac{(\Delta u)^{2}}{N}dm+\int\varphi\langle\nabla u,\nabla(\Delta u)\rangle dm}\\
 &  & +K\int\varphi|\nabla u|_w^{2}dm\Big).
\end{eqnarray*}
where $\chi$ is the indicator function of $U$. Since this holds for
any test function with support in $U$ we also have
\[
(\mathcal{L}_{|\nabla \tilde{u}|_w^{2}})_U \ge2 \chi_U \left(\frac{(\Delta \tilde{u})^{2}}{N}dm+\langle\nabla \tilde{u},\nabla(\Delta \tilde{u})\rangle dm+K|\nabla \tilde{u}|_w^{2}dm\right)
\]
We conclude by noting that $\tilde{u} = u$ in $U$.
\end{proof}

\

\section{Cheng-Yau type local gradient estimate}\label{s:Cheng Yau gradient}
In this section, we shall
prove Cheng-Yau type local gradient estimate for harmonic functions on $RCD^*(K,N)$ mms. We
denote by $B_R=B_R(x_0)$ the open ball in $X$ centered at some
point $x_0\in X$ with radius $R$.

For $RCD^{*}(0,N)$ mms, the standard Moser iteration using the
volume doubling property (implied by the Bishop-Gromov volume
comparison \eqref{e:BishopGromov}) and the Poincar\'e inequality
\eqref{e:Poincare CD(K,N)} yield the following Harnack inequality,
see e.g. \cite{HanLin97,Hua09,Hua11}. Note that the proofs in those papers
only require the doubling and adapt to the Sobolev calculus presented in this paper.

\begin{thm}\label{t:Moser iteration}
Let $(X,d,m)$ be an $RCD^*(0,N)$ mms. Then
\begin{enumerate}[(a)]
\item (mean value inequality) there exists a constant $C=C(N)$ such that for any subharmonic function $u$ on $B_{2R}$
\begin{equation}\label{mvi}
\ess \sup_{B_R} u\leq C\fint_{B_{2R}}|u|dm,
\end{equation} where $\fint_{B_{2R}}|u|dm=\frac{1}{|B_{2R}|}\int_{B_{2R}}|u|dm.$

\item (weak Harnack inequality) there exists a constant $C=C(N)$ such that for any nonnegative superharmonic function $u$ on $B_{2R}$
\begin{equation*}
\ess \inf_{B_R} u\geq C\fint_{B_{2R}}udm.
\end{equation*}

\item (Harnack inequality) there exists a constant $C=C(N)$ such that for any nonnegative harmonic function $u$ on $B_{2R}$
\begin{equation*}
\sup_{B_R} u\leq C \inf_{B_R} u.
\end{equation*}
\end{enumerate}
\end{thm}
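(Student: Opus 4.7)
The plan is to implement the classical De Giorgi--Moser iteration in the metric setting, following Saloff-Coste, Biroli--Mosco, and Hajlasz--Koskela. The required inputs are all in place: volume doubling (Theorem \ref{t:BishopGromov} with $K=0$ applied with $R=2r$), the $(1,2)$-Poincar\'e inequality (Theorem \ref{t:Poincare} with $K=0$), and the chain and product rules of Theorem \ref{t:basic properties of weak upper gradient}, which allow us to treat $u^{p}$, $u^{-p}$, and $\log u$ as test functions. From doubling and Poincar\'e, the Hajlasz--Koskela chaining argument produces an intrinsic Sobolev inequality of the form
\[
\left(\fint_{B_{r}} |v|^{2\chi}\, dm\right)^{\frac{1}{2\chi}} \leq C\, r \left(\fint_{B_{r}} |\nabla v|^{2}\, dm\right)^{\frac{1}{2}} + C \left(\fint_{B_{r}} v^{2}\, dm\right)^{\frac{1}{2}}
\]
valid for all $v\in W^{1,2}(B_{r})$, with $\chi>1$ depending only on $N$.

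For (a), given a nonnegative subharmonic $u$ on $B_{2R}$, I would test Definition \ref{d:harmonic} with $\varphi=\eta^{2}u^{2p-1}$ for $p\geq 1$ and a Lipschitz cutoff $\eta$ between nested balls $B_{r}\subset B_{s}\subset B_{2R}$. The chain and product rules give the Caccioppoli-type bound
\[
\int_{B_{r}} |\nabla u^{p}|^{2}\, dm \leq \frac{C\,p^{2}}{(s-r)^{2}} \int_{B_{s}} u^{2p}\, dm,
\]
which combined with the Sobolev inequality applied to $\eta u^{p}$ yields an $L^{2p\chi}$-to-$L^{2p}$ bootstrap. Iterating with $p_{k}=\chi^{k}$ on radii $r_{k}=R(1+2^{-k})$ produces a convergent geometric series of constants, giving $\ess\sup_{B_{R}} u \leq C\,(\fint_{B_{2R}} u^{2})^{1/2}$; a further iteration step (or the Bombieri--Giusti--Trudinger device) reduces the exponent $2$ to $1$.

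For (b), let $u>0$ be superharmonic on $B_{2R}$. Running the same iteration with negative exponents, testing against $\eta^{2}u^{-2p-1}$ where the sign of superharmonicity is favourable, gives
\[
\ess\sup_{B_{R}} u^{-1} \leq C \left(\fint_{B_{3R/2}} u^{-p_{0}}\, dm\right)^{1/p_{0}}
\]
for any fixed $p_{0}>0$. Independently, testing against $\varphi=\eta^{2}/u$ produces the log-Caccioppoli bound $\int|\nabla\log u|^{2}\eta^{2}\,dm \leq C R^{-2}\int\eta^{2}\,dm$, so $\log u\in \mathrm{BMO}(B_{3R/2})$. The John--Nirenberg lemma, which in the abstract setting follows from doubling and $(1,1)$-Poincar\'e (see Saloff-Coste), then furnishes a dimensional $p_{0}>0$ with $(\fint u^{p_{0}})^{1/p_{0}}(\fint u^{-p_{0}})^{1/p_{0}}\leq C$. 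Combined with a positive-exponent bootstrap $\fint_{B_{2R}} u \leq C(\fint_{B_{3R/2}} u^{p_{0}})^{1/p_{0}}$ this yields (b). Finally, (c) follows by chaining: a positive harmonic function satisfies both (a) and (b), so $\sup_{B_{R}} u \leq C \fint_{B_{2R}} u\, dm \leq C'\inf_{B_{R}} u$.

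The main technical obstacle, which is routine but not completely trivial, is to verify that the nonlinear test functions $\eta^{2}u^{2p-1}$, $\eta^{2}u^{-2p-1}$, $\eta^{2}/u$ and $\eta^{2}\log u$ actually belong to $W^{1,2}_{c}$ at each iteration step. This is handled by truncating $u$ at levels $k\to\infty$ (or $\delta\to 0^{+}$ from above for negative powers), using the algebra and chain-rule properties in Theorem \ref{t:basic properties of weak upper gradient} to compute the gradients, and invoking the local boundedness produced by the previous iteration step to pass to the limit. Since no smooth structure is used and everything is driven by doubling, Poincar\'e, and local calculus, the whole Moser scheme transfers verbatim to the $RCD^{*}(0,N)$ setting.
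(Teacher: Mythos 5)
Your proposal is exactly the argument the paper has in mind: the authors do not prove this theorem but simply invoke the standard Moser iteration from doubling and the $(1,2)$-Poincar\'e inequality (citing Han--Lin), which is precisely the Caccioppoli--Sobolev bootstrap plus John--Nirenberg crossover you describe. The proposal is correct and matches the paper's (implicit) approach, with the only caveat that part (a) as stated should really be applied to nonnegative subharmonic functions or to $u_{+}$, which is how the paper uses it later.
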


In order to prove Theorem \ref{t:Yau's gradient estimate},
 we need to adopt a delicate Moser
iteration based on the local Bochner inequality. By the
Bishop-Gromov volume comparison and the Poincar\'e inequality for
$RCD^*(K,N)$ mms above, we can prove the following Sobolev
inequality, see e.g. \cite[Lemma 3.2]{MunteanuWang11}.
\begin{thm}[local uniform Sobolev inequality]\label{Sob}Let $(X,d,m)$ satisfy $RCD^*(K,N),$ with $K\leq 0$. Then there exist two constants $\nu>2$ and $C$, both depending only on $N$, such that for $B_{R}\subset X$ and $u\in W_{\loc}^{1,2}(B_R)$,
\begin{eqnarray}\label{Sobolev0}
\left(\int_{B_R} (u-u_{B_R})^{\frac{2\nu}{\nu-2}}dm
\right)^{\frac{\nu-2}{\nu}} \leq
e^{C(1+\sqrt{-K}R)}R^2|B_R|^{-\frac2\nu}\int_{B_R}|\nabla
u|_w^2dm,\end{eqnarray} where $u_{B_R}=\fint_{B_R}udm.$ In particular,
\begin{eqnarray}\label{Sobolev}
\left(\int_{B_R} u^{\frac{2\nu}{\nu-2}}dm
\right)^{\frac{\nu-2}{\nu}} \leq
e^{C(1+\sqrt{-K}R)}R^2|B_R|^{-\frac2\nu}\int_{B_R} (|\nabla
u|_w^2+R^{-2}u^2)dm.\end{eqnarray}
\end{thm}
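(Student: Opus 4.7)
The plan is to combine the two inputs already established for $RCD^*(K,N)$ spaces---the Bishop--Gromov comparison (Theorem \ref{t:BishopGromov}) and the scale-invariant Poincar\'e inequality (Theorem \ref{t:Poincare})---via the classical Saloff-Coste scheme, which extracts an $L^{2\nu/(\nu-2)}$-Sobolev inequality on a ball from a local volume-doubling property together with an $L^2$-Poincar\'e inequality, while keeping explicit track of the $K$-dependent constants.

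First I would deduce from \eqref{e:BishopGromov} a pseudo-doubling condition on sub-balls of $B_R$: for any $x \in B_R$ and $0 < r < R$, an elementary estimate of the model volume ratio $V_K^N(2r)/V_K^N(r)$ gives
\[
\frac{|B_{2r}(x)|}{|B_r(x)|} \le 2^N e^{c(N)\sqrt{-K}R}.
\]
Iterating across dyadic scales yields $|B_R|/|B_r(x)| \le C(N)(R/r)^{\nu} e^{c(N)\sqrt{-K}R}$ for some $\nu > 2$ depending only on $N$; this is the localized doubling exponent that will play the role of a Sobolev dimension.

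Next I would invoke the Saloff-Coste machinery: in a metric measure space where every ball of radius $\le R$ satisfies both this localized doubling estimate and the $L^2$-Poincar\'e inequality \eqref{e:Poincare CD(K,N)}, one obtains the desired $L^{2\nu/(\nu-2)}$-Sobolev inequality on $B_R$. The standard argument decomposes $u - u_{B_R}$ along a telescoping sequence of sub-balls, applies Poincar\'e on each, and then sums using a Whitney-type covering. The exponential factor $e^{C(1+\sqrt{-K}R)}$ arises from combining the Poincar\'e constant on $B_{2R}$ with the doubling factors accumulated across scales. This argument is carried out with explicit constants in \cite[Lemma 3.2]{MunteanuWang11}, which is directly adaptable since it uses only doubling and Poincar\'e.

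Finally, to pass from \eqref{Sobolev0} to \eqref{Sobolev}, I write $u = (u - u_{B_R}) + u_{B_R}$ and apply the triangle inequality in $L^{2\nu/(\nu-2)}$: the first summand is controlled by \eqref{Sobolev0}, while the constant $u_{B_R}$ contributes $|u_{B_R}| \cdot |B_R|^{(\nu-2)/(2\nu)}$, which by Jensen is bounded in terms of $|B_R|^{-2/\nu}\int_{B_R} u^2\,dm$. Rearranging and multiplying through by $R^{-2}$ produces exactly the $R^{-2}u^2$ term on the right-hand side of \eqref{Sobolev}.

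The main obstacle is pure bookkeeping: verifying that all constants depend only on $N$ and that the exponential dependences on $\sqrt{-K}R$ coming from the doubling step and from the Poincar\'e inequality combine into a single factor of the clean form $e^{C(1+\sqrt{-K}R)}$. No new analytic ingredient beyond what is already cited is needed; the statement is essentially \cite[Lemma 3.2]{MunteanuWang11} transcribed into the $RCD^*(K,N)$ framework, where the two required structural properties have been recorded as Theorems \ref{t:BishopGromov} and \ref{t:Poincare}.
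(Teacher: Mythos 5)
Your proposal is correct and follows exactly the route the paper takes: the authors give no detailed proof, simply observing that the inequality follows from the Bishop--Gromov comparison (Theorem \ref{t:BishopGromov}) and the Poincar\'e inequality (Theorem \ref{t:Poincare}) via the argument of \cite[Lemma 3.2]{MunteanuWang11}, which is precisely the doubling-plus-Poincar\'e (Saloff-Coste) scheme you describe. Your additional details---the $e^{c(N)\sqrt{-K}R}$-controlled local doubling, the Whitney/telescoping step, and the triangle-inequality passage from \eqref{Sobolev0} to \eqref{Sobolev}---are the standard way to fill in what the paper leaves implicit.
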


By the Sobolev inequality above, we may adopt a delicate Moser
iteration as in Hua-Xia \cite{HuaXia13} to prove Cheng-Yau's local gradient
estimate, Theorem \ref{t:Yau's gradient estimate}. For the
completeness, we include the proof here.

\begin{proof}[Proof of Theorem \ref{t:Yau's gradient estimate}]
By Lemma \ref{l:Lipschitz regularity} $u$ is locally Lipschitz continuous.
Thus $u_\epsilon=u+\epsilon \in\Li(B_{2(R-\epsilon)}$ and 
$u_\epsilon\ge\epsilon$ on $B_{2(R-\epsilon)}$. 
If for $u_\epsilon$ the claim holds then
\[
\frac{|\nabla  u|_w}{u+\epsilon}\leq C(N)\frac{1+\sqrt{-K}(R-\epsilon)}{R-\epsilon} \ \ \ \ \
\mathrm{in}\ B_{R-\epsilon}.
\]
then letting $\epsilon\to 0$ implies the claim for $u$.

So without loss of generality assume, in addition, that  
$u\in\Li(B_{2R})$ and $u\ge\epsilon$ on $B_{2R}$. 
Theorem \ref{t:Bochner and W12} yields that $|\nabla u|_w^2\in W_{\rm loc}^{1,2}(B_{2R})$. Set $v:=\log u$. One can easily verify that 
\begin{eqnarray}\label{Peq1}
\mathcal{L}_v=-|\nabla v|_w^2\cdot dm.
\end{eqnarray}

Since $u\ge\epsilon$, $v\in \Li(B_{2R})$ and by setting $f=|\nabla v|_w^2$, it follows
from the Bochner inequality \eqref{e:Bochner sigma finite} in
Theorem~\ref{t:Bochner and W12} that for any $0\leq\eta\in
\Li_c(B_{2R})$,
\begin{eqnarray}\label{Peq2} \int_{B_{2R}} \langle\nabla\eta, \nabla f\rangle dm  \leq \int_{B_{2R}} \eta \left(2\langle \nabla v, \nabla f\rangle-2Kf-\frac{2}{N}f^2\right)dm.\end{eqnarray}

In fact, by an approximation argument, \eqref{Peq2} holds for any
$0\leq\eta\in W^{1,2}_c(B_{2R})\cap L^{\infty}(B_{2R}).$ Let
$\eta=\phi^2f^\beta$, with $\phi\in \Li_c(B_{2R})$, $0\leq\phi\leq
1$ and $\beta\geq
1$. 
Then $\eta$ is an admissible test function for \eqref{Peq2}. Hence
we have from \eqref{Peq2} that
\begin{eqnarray*}\label{Peq3}
&&\int_{B_{2R}} \left(\beta\phi^2f^{\beta-1}|\nabla f|_w^2+2\phi
f^\beta \langle\nabla f,\nabla \phi\rangle\right) dm\nonumber\\&\leq
&\int_{B_{2R}} \phi^2f^\beta \left(2\langle \nabla v, \nabla
f\rangle-2Kf-\frac{2}{N}f^2\right)dm .
\end{eqnarray*}

It follows from the Cauchy-Schwarz inequality ((7) in Theorem \ref{t:basic properties of weak upper gradient})  that
\begin{eqnarray*}\label{Peq7}
\frac{4\beta}{(\beta+1)^2}\int_{B_{2R}} \phi^2 |\nabla
f^{\frac{\beta+1}{2}}|_w^2dm  &\leq&\frac{4}{\beta+1}\int_{B_{2R}}
\phi f^{\frac{\beta+1}{2}}|\nabla\phi|_w  |\nabla
f^{\frac{\beta+1}{2}}|_wdm\nonumber\\&&+\frac{4}{\beta+1}\int_{B_{2R}}
\phi^2f^{\frac{\beta+2}{2}}|\nabla
f^{\frac{\beta+1}{2}}|_wdm\nonumber\\&&-\int_{B_{2R}}
\frac{2}{N}\phi^2f^{\beta+2}dm-\int_{B_{2R}} 2K\phi^2f^{\beta+1}dm.
\end{eqnarray*}
Using the H\"older inequality, we obtain
\begin{eqnarray*}\label{Peq8a} 
\int_{B_{2R}} \phi^2 |\nabla f^{\frac{\beta+1}{2}}|_w^2dm &\leq
&C\int_{B_{2R}} |\nabla\phi|_w^2  f^{\beta+1}dm+C\int_{B_{2R}}
\phi^2f^{\beta+2}dm\nonumber\\&&-C \beta\int_{B_{2R}}
\phi^2f^{\beta+2}dm-C\beta K\int_{B_{2R}} \phi^2f^{\beta+1}dm.
\end{eqnarray*}
We remark that from now on,  $C$ denotes various constants depend only
on $N$.

For $\beta$ sufficiently large, we can absorb the second term on the right hand side and get
\begin{eqnarray}\label{Peq8}
&&\int_{B_{2R}}  |\nabla (\phi
f^{\frac{\beta+1}{2}})|_w^2dm+C\beta\int_{B_{2R}} \phi^2f^{\beta+2}dm
\nonumber\\&\leq &2C\int_{B_{2R}} |\nabla\phi|_w^2
f^{\beta+1}dm-C\beta K\int_{B_{2R}}
\phi^2f^{\beta+1}dm.\end{eqnarray}
Using the Sobolev inequality \eqref{Sobolev}, we obtain
\begin{eqnarray}\label{Peq15}
&&\left(\int_{B_{2R}}\phi^{2\chi} f^{(\beta+1)\chi}
dm\right)^{\frac{1}{\chi}}  \leq
e^{C(1+\sqrt{-K}R)}R^2|B_{2R}|^{-\frac2\nu}\bigg(C\int_{B_{2R}}
|\nabla \phi|_w^2 f^{\beta+1}dm\nonumber\\&&\ \ \ \ \ \ \ \ \ \ \ \ \
\ \  \ \ \ +(CR^{-2}-C\beta K)\int_{B_{2R}}
\phi^2f^{\beta+1}dm-\beta\int_{B_{2R}}
\phi^2f^{\beta+2}dm\bigg),\end{eqnarray} where $\chi=\nu/(\nu-2)$.

\

We first use \eqref{Peq15} to prove the following:
\begin{lemma}\label{lem1}There exists two large positive constants $C_0$ and $C$ such that  for $\beta_0=C_{0}(1+\sqrt{-K}R)$ and $\beta_1=(\beta_0+1)\chi$,  we have $f\in L^{\beta_1}(B_{\frac32R})$ and \begin{eqnarray}\label{Peq10}
\|f\|_{L^{\beta_1}\big(B_{\frac32R}\big)}  &\leq &C
\frac{(1+\sqrt{-K}R)^2}{R^2}|B_{2R}|^{\frac{1}{\beta_1}}.\end{eqnarray}
\end{lemma}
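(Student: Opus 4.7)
The plan is to apply the integrated Bochner-type inequality \eqref{Peq15} once with the carefully chosen exponent $\beta=\beta_0:=C_0(1+\sqrt{-K}R)$, where $C_0=C_0(N)$ is a large constant, and to use Young's inequality to absorb the lower-order $f^{\beta+1}$ terms on the right-hand side into the good term $-\beta\int\phi^2 f^{\beta+2}\,dm$ produced by Bochner, supplemented by a standard Moser bootstrapping argument to control the remaining $\int|\nabla\phi|^2 f^{\beta+1}\,dm$ contribution. The particular size $\beta_0\sim 1+\sqrt{-K}R$ is dictated by the need to balance the curvature-dependent coefficient $C\beta(-K)$ against the Bochner weight $\beta$.

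Concretely, I would first fix a Lipschitz cut-off $\phi\in\Li_c(B_{2R})$ with $\phi\equiv 1$ on $B_{3R/2}$, $0\le\phi\le 1$, and $|\nabla\phi|\le C/R$. The harmonicity of $u>0$ on $B_{4R}$ combined with the Harnack inequality (Theorem~\ref{t:Moser iteration}(c)) and Lemma~\ref{l:Lipschitz regularity} yields $v=\log u\in\Li_{\loc}(B_{3R})$ and $f=|\nabla v|^2\in L^{\infty}_{\loc}(B_{3R})$, so $\eta=\phi^2 f^{\beta_0}\in W^{1,2}_c\cap L^{\infty}$ is admissible in \eqref{Peq2} and \eqref{Peq15} applies. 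For $A\ge 0$, Young's inequality with conjugate exponents $\tfrac{\beta+2}{\beta+1}, \beta+2$ gives
\[
A\,\phi^2 f^{\beta+1}\le\tfrac{\beta}{2}\phi^2 f^{\beta+2}+C\cdot 2^{\beta+1}\,\tfrac{A^{\beta+2}}{\beta^{\beta+2}}\,\phi^2,
\]
and applying this with $A=CR^{-2}+C\beta_0(-K)$ to the middle term of \eqref{Peq15} absorbs it into half of the $-\beta_0\int\phi^2 f^{\beta_0+2}\,dm$ term, leaving only a volume error. The remaining term $C\int|\nabla\phi|^2 f^{\beta_0+1}\,dm$ is controlled by a Moser bootstrap from the base $L^1$ estimate $\int_{B_{2R}}f\,dm\le C|B_{2R}|/R^2$, itself obtained by testing $\mathcal{L}_v=-f\,dm$ against a standard squared cut-off and applying Cauchy-Schwarz.

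The key algebraic observation is that, using $-K=(\sqrt{-K}R)^2/R^2$ together with $\beta_0\ge C_0\ge 1$,
\[
\frac{A}{\beta_0}=\frac{C}{R^2}\Big(\frac{1}{\beta_0}+(\sqrt{-K}R)^2\Big)\le\frac{C(1+\sqrt{-K}R)^2}{R^2},
\]
so $(A/\beta_0)^{\beta_0+2}\le C^{\beta_0+2}(1+\sqrt{-K}R)^{2(\beta_0+2)}/R^{2(\beta_0+2)}$. Inserting this into \eqref{Peq15} and using $1-2/\nu=1/\chi$ with $\beta_1=(\beta_0+1)\chi$ yields
\[
\|f\|_{L^{\beta_1}(B_{3R/2})}^{\beta_0+1}\le e^{C(1+\sqrt{-K}R)}C^{\beta_0+2}\frac{(1+\sqrt{-K}R)^{2(\beta_0+2)}}{R^{2(\beta_0+1)}}|B_{2R}|^{1/\chi}.
\]
Taking the $(\beta_0+1)$-th root, the extra factors $e^{C(1+\sqrt{-K}R)/(\beta_0+1)}$ and $(1+\sqrt{-K}R)^{2/(\beta_0+1)}$ are bounded by $N$-dependent constants precisely because $\beta_0+1\sim 1+\sqrt{-K}R$, and they collapse into the overall constant $C=C(N)$, producing the claimed bound. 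The main obstacle is the careful bookkeeping of constants through the Young absorption and the root extraction, showing that the exponential Sobolev prefactor collapses to the polynomial prefactor $(1+\sqrt{-K}R)^2/R^2$; this is exactly what the scaling $\beta_0\sim 1+\sqrt{-K}R$ is designed to deliver.
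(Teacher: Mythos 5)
Your overall strategy -- apply \eqref{Peq15} once at the carefully scaled exponent $\beta_0=C_0(1+\sqrt{-K}R)$, absorb the $f^{\beta_0+1}$ curvature term into the good term $-\beta_0\int\phi^2f^{\beta_0+2}\,dm$ by a Young/splitting argument, and kill the exponential Sobolev prefactor by the $(\beta_0+1)$-st root -- is exactly the paper's, and your treatment of the middle term of \eqref{Peq15} is essentially the same as \eqref{Qeq1} (up to a minor homogeneity slip: your Young error carries $R^{-2(\beta_0+2)}$ rather than $R^{-2(\beta_0+1)}$, so the root extraction leaves a stray $R^{-2/(\beta_0+1)}$ that does not collapse for small $R$; this is fixable).

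The genuine gap is your treatment of the cut-off term $C\int|\nabla\phi|^2f^{\beta_0+1}\,dm$. You propose to control it ``by a Moser bootstrap from the base $L^1$ estimate $\int_{B_{2R}}f\,dm\le C|B_{2R}|/R^2$.'' But each step of such a bootstrap from $L^{\gamma}$ to $L^{\gamma\chi}$ invokes the Sobolev inequality \eqref{Sobolev}, whose constant is $e^{C(1+\sqrt{-K}R)}$, contributing a factor $e^{C(1+\sqrt{-K}R)/\gamma}$ to the norm; starting at $\gamma=1$, the accumulated factor is at least $e^{c(1+\sqrt{-K}R)}$ to the \emph{first} power in $\|f\|_{L^{\beta_0+1}}$. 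Feeding this back into $\int|\nabla\phi|^2f^{\beta_0+1}\,dm\le CR^{-2}\|f\|_{L^{\beta_0+1}}^{\beta_0+1}$ and taking the $(\beta_0+1)$-st root at the end still leaves $e^{c(1+\sqrt{-K}R)}$ in the final bound, which is exponentially worse than the claimed $(1+\sqrt{-K}R)^2/R^2$. (Alternatively, a quantitative a priori $L^{\beta_0+1}$ bound with polynomial constant is essentially the statement of the lemma itself, so the bootstrap is circular.) The paper avoids ever needing an a priori $L^{\beta_0+1}$ bound: it takes $\phi=\psi^{\beta_0+2}$, so that $R^2|\nabla\phi|^2\le C\beta_0^2\,\phi^{2(\beta_0+1)/(\beta_0+2)}$, and then H\"older with exponents $\tfrac{\beta_0+2}{\beta_0+1}$ and $\beta_0+2$ followed by Young absorbs this term too into $\tfrac12\beta_0R^2\int\phi^2f^{\beta_0+2}\,dm$, at the cost only of the admissible volume error $C\beta_0^{\beta_0+3}R^{-2(\beta_0+1)}|B_{2R}|$ (see \eqref{Qeq2}). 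You need this (or an equivalent) device; as written, your argument does not yield the polynomial dependence on $\sqrt{-K}R$.
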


\begin{proof}
Let $C_0$ be large enough such that $\beta_0=C_{0}(1+\sqrt{-K}R)$
satisfies \eqref{Peq8} and \eqref{Peq15}. We rewrite \eqref{Peq15}
for $\beta=\beta_0$ as
\begin{eqnarray}\label{Qeq0}
\left(\int_{B_{2R}}\phi^{2\chi} f^{(\beta_0+1)\chi}dm
\right)^{\frac{1}{\chi}}& \leq
&e^{C\beta_0}|B_{2R}|^{-\frac2\nu}\bigg(CR^2\int_{B_{2R}}
|\nabla \phi|_w^2 f^{\beta_0+1}dm\\&&+C_1\beta_0^3\int_{B_{2R}}
\phi^2f^{\beta_0+1}dm-\beta_0R^2\int_{B_{2R}}
\phi^2f^{\beta_0+2}dm\bigg).\nonumber\end{eqnarray}

 We estimate the second term on the right-hand side of \eqref{Qeq0} as
 follows:
\begin{eqnarray}\label{Qeq1} C_{1}\beta_0^3\int_{B_{2R}} \phi^2f^{\beta_0+1}dm&=&
C_1\beta_0^3\left(\int_{\{f\geq 2C_1\beta_0^2R^{-2}\}} \phi^2f^{\beta_0+1}dm+\int_{\{f< 2C_{1}\beta_0^2R^{-2}\}} \phi^2f^{\beta_0+1}dm\right)\nonumber\\
&\leq &\frac12\beta_0R^2 \int_{B_{2R}}
\phi^2f^{\beta_0+2}dm+C^{\beta_0+1}\beta_0^3\left(\frac{\beta_0}{R}\right)^{2(\beta_0+1)}|B_{2R}|.\end{eqnarray}
Set $\phi=\psi^{{\beta_0+2}}$ with $\psi\in \Li_0(B_{2R})$
satisfying
\begin{eqnarray*}\label{Peq11}
0\leq \psi\leq 1,\quad \psi\equiv 1 \hbox{ in } B_{\frac32R},\
|\nabla \psi|_w\leq \frac{C}{R}.\end{eqnarray*} Then
$$R^2|\nabla\phi|_w^2\leq
C\beta_0^2\phi^{\frac{2(\beta_0+1)}{\beta_0+2}}.$$ By the
H\"older inequality and the Young inequality, the first term in the
right-hand side of \eqref{Qeq0} can be estimated as follows:
\begin{eqnarray}\label{Qeq2}
CR^2\int_{B_{2R}} |\nabla\phi|_w^2 f^{\beta_0+1}dm
&\leq& C\beta_0^2\int_{B_{2R}} \phi^{\frac{2(\beta_0+1)}{\beta_0+2}}f^{\beta_0+1}dm\nonumber\\
&\leq &C\beta_0^2\left(\int_{B_{2R}} \phi^{2}f^{\beta_0+2}dm\right)^{\frac{\beta_0+1}{\beta_0+2}}|B_{2R}|^{\frac{1}{\beta_0+2}}\nonumber\\
&\leq& \frac12\beta_0R^2\int_{B_{2R}}
\phi^{2}f^{\beta_0+2}dm+C\beta_0^{\beta_0+3}R^{-2(\beta_0+1)}|B_{2R}|.\nonumber\\
&&
\end{eqnarray}
Substituting the estimates \eqref{Qeq1} and \eqref{Qeq2} into
\eqref{Qeq0}, we obtain
\begin{eqnarray*}\label{Qeq3}
\left(\int_{B_{2R}}\phi^{2\chi} f^{(\beta_0+1)\chi}dm
\right)^{\frac{1}{\chi}} \leq
2e^{C\beta_0}C^{\beta_0+1}\beta_0^3\left(\frac{\beta_0}{R}\right)^{2(\beta_0+1)}|B_{2R}|^{1-\frac2\nu}.\end{eqnarray*}
Taking the $(\beta_0+1)$-st root on both sides, we get
\begin{eqnarray*}\label{Qeq4} \|f\|_{L^{\beta_1}(B_{\frac32R})} \leq C\left(\frac{\beta_0}{R}\right)^{2}|B_{2R}|^{\frac{1}{\beta_1}}.\end{eqnarray*}
\end{proof}

Now we start from \eqref{Peq15} and use Moser's iteration to prove
Theorem \ref{t:Yau's gradient estimate}.

Let $R_k=R+R/2^k$ and $\phi_k\in \Li_0(B_{R_k})$ satisfy
\begin{eqnarray*}\label{Peq16}
0\leq \phi_k\leq 1,\quad \phi_k\equiv 1 \hbox{ in }
B_{R_{k+1}},\quad|\nabla\phi_k|_w\leq
C\frac{2^{k+1}}{R}.\end{eqnarray*} Let $\beta_0,\beta_1$ be the
numbers in Lemma \ref{lem1} and $\beta_{k+1}=\beta_k\chi$ for $k\geq
1.$ One can deduce from \eqref{Peq15}  with $\beta+1=\beta_k$ and
$\phi=\phi_k$ that (we have dropped the last term on the right-hand
side of \eqref{Peq15} since it is negative)
\begin{eqnarray*}\label{Peq17} \|f\|_{L^{\beta_{k+1}}(B_{R_{k+1}})}\leq e^{C\frac{\beta_0}{\beta_k}}|B_{2R}|^{-\frac2\nu\frac{1}{\beta_k}}(4^{k}+\beta_0^2\beta_k)^{\frac{1}{\beta_k}}\|f\|_{L^{\beta_{k}}(B_{R_{k}})}.\end{eqnarray*}
Hence by iteration we get
\begin{eqnarray*}\label{Peq18} \|f\|_{L^\infty(B_{R})}\leq  e^{C\beta_0\sum_k \frac{1}{\beta_k}}|B_{2R}|^{-\frac2\nu \sum_k \frac{1}{\beta_k}}\prod_{k}(4^{k}+2\beta_0^3\chi^k)^{\frac{1}{\beta_k}}\|f\|_{L^{\beta_{1}}(B_{\frac32 R})}. \end{eqnarray*}
Since $\sum_k \frac{1}{\beta_k}=\frac{\nu}{2}\frac{1}{\beta_1}$ and
$\sum_k \frac{k}{\beta_k}$ converges, we have
\begin{eqnarray*}\label{Peq19} \|f\|_{L^\infty(B_{R})}&\leq&  Ce^{C\frac{\beta_0}{\beta_1}}\beta_0^{\frac{3\nu}{2}\frac{1}{\beta_1}}|B_{2R}|^{-\frac{1}{\beta_1}}\|f\|_{L^{\beta_{1}}(B_{\frac32 R})}\nonumber\\&\leq &C|B_{2R}|^{-\frac{1}{\beta_1}}\|f\|_{L^{\beta_{1}}(B_{\frac32 R})}.\end{eqnarray*}
Using Lemma \ref{lem1}, we conclude
\begin{eqnarray*}\label{Peq20} \|f\|_{L^\infty(B_{R})}\leq C(N)\frac{(1+\sqrt{-K}R)^2}{R^2},\end{eqnarray*}
which implies
\begin{eqnarray*}\label{Peq21} \||\nabla \log u|_w\|_{L^\infty(B_{R})}\leq C(N)\frac{1+\sqrt{-K}R}{R}.\end{eqnarray*}
This proves Theorem \ref{t:Yau's gradient estimate}.

\end{proof}

\

\section{Polynomial growth harmonic functions}\label{s:polynomial growth harmonics}
Since the Laplace operator on $RCD$ mms is linear, we may study
the dimension of the space of polynomial growth harmonic functions
as Colding-Minicozzi did
\cite{ColdingMinicozzi97,ColdingMinicozzi98,ColdingMinicozzi98Weyl}.
In this section, we will prove our main results on dimension
estimates, Theorem \ref{t:polynomial growth harmonics} and Theorem
\ref{t:linear growth harmonics}.

Fix some $p\in X.$ For any $q>0$, let $$H^q(X):=\{u\in W^{1,2}_{\rm
loc}(X): \mathcal{L}_u=0, |u(x)|\leq C(1+d(x,p))^q\}$$ denote the
space of polynomial growth harmonic functions on $X$ with growth
rate less than or equal to $q.$

To estimate the dimension of $H^q(X)$, we need the Bishop-Gromov
volume comparison \eqref{e:BishopGromov} and the Poincar\'e
inequality \eqref{e:Poincare CD(K,N)}, see
\cite{ColdingMinicozzi98Weyl,Li97,Hua11}. In the following, we shall
prove Theorem \ref{t:polynomial growth harmonics} for which we do not
need the Bochner inequality.  The first lemma follows easily from a
contradiction argument.

\begin{lemma}[{\cite[Lemma 3.4]{Hua11}}]\label{xxw2}
For any finite dimensional subspace $S\subset H^q(X)$, there exists
a constant $R_0(S)$ depending on $S$, such that for $\forall\ R\geq
R_0$
$$\langle u,v\rangle_R=\int_{B_R}uv dm$$ is an inner product on $S$.
\end{lemma}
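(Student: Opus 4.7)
The bilinear form $\langle u,v\rangle_R = \int_{B_R} uv\,dm$ is manifestly symmetric and bilinear in $u$ and $v$, so the only point requiring argument is positive definiteness: for every nonzero $u \in S$ and all sufficiently large $R$, one must show $\int_{B_R} u^2\,dm > 0$. The natural temptation would be to appeal to unique continuation of harmonic functions, but as the authors emphasize immediately before the statement, this property is unavailable on general metric measure spaces. My plan is to sidestep unique continuation entirely by exploiting the finite-dimensionality of $S$ through a compactness argument.

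First I would argue by contradiction: suppose no such $R_0(S)$ exists. Then there is a sequence $R_n \to \infty$ together with nonzero $u_n \in S$ satisfying $\int_{B_{R_n}} u_n^2\,dm = 0$. Since $S$ is finite dimensional, I would equip it with any norm $\|\cdot\|_S$, rescale so that $\|u_n\|_S = 1$, and use compactness of the unit sphere in a finite-dimensional normed space to extract a subsequence (still denoted $u_n$) converging in $S$ to some $u$ with $\|u\|_S = 1$; in particular $u \neq 0$ in $S$. Because any two norms on the finite-dimensional space $S$ are equivalent, the restriction map $S \to L^2(B_R)$ is continuous for every fixed $R > 0$, so the same subsequence also satisfies $u_n \to u$ in $L^2(B_R)$.

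Next, for any fixed $R > 0$ and $n$ large enough that $R_n \geq R$, monotonicity of the integral gives $\int_{B_R} u_n^2\,dm \leq \int_{B_{R_n}} u_n^2\,dm = 0$, so each such $u_n$ vanishes $m$-a.e.\ on $B_R$. Passing to the $L^2(B_R)$ limit yields $\int_{B_R} u^2\,dm = 0$, whence $u = 0$ $m$-a.e.\ on $B_R$. Since $R$ was arbitrary, $u = 0$ $m$-a.e.\ on $X$, meaning that $u$ is the zero element of $W^{1,2}_{\loc}(X)$ and hence of $S$, contradicting $\|u\|_S = 1$.

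The main (and really only) obstacle I anticipate is conceptual rather than technical: one must resist the reflex to invoke unique continuation and instead notice that finite-dimensionality of $S$ converts a pointwise/localization problem into a compactness problem. Beyond the standing inclusion $H^d(X) \subset W^{1,2}_{\loc}(X)$ and the elementary fact that norms on finite-dimensional spaces are equivalent, no further geometric input from the $\RCD^*(K,N)$ framework enters the proof.
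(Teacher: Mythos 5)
Your proof is correct and is essentially the same contradiction argument the paper alludes to (and which is carried out in the cited reference \cite{Hua11}): finite-dimensionality of $S$ gives compactness of the normalized sequence, whose limit is a nonzero element of $S$ vanishing $m$-a.e.\ on every ball and hence on all of $X$, a contradiction. The continuity of the restriction maps $S\to L^2(B_R)$ is justified exactly as you say, so no gap remains.
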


The next lemma follows verbatim from \cite[Lemma 28.3]{Li12} or
\cite[Lemma 3.7]{Hua11}.

\begin{lemma}\label{lfd1} Let $(X,d,m)$ be an $RCD^*(0,N)$ mms and $S$ be a $k$-dimensional subspace of $ H^q(X)$.
For any $p \in X, \beta>1, \delta>0, R_0>0,$ there exists $R>R_0$
such that if $\{u_i\}_{i=1}^k$ is an orthonormal basis of $S$ with
respect to the inner product $<u,v>_{\beta R}:=\int_{B_{\beta
R}(p)}uvdm,$ then
$$\sum_{i=1}^k\int_{B_R(p)}u_i^2dm\geq k\beta^{-(2q+N+\delta)}.$$
\end{lemma}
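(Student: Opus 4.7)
The plan is to reduce the trace estimate to a basis-independent ratio of Gram determinants, and then to derive a contradiction if this ratio is too small for every $R>R_0$. Fix any basis $v_1,\dots,v_k$ of $S$ and, for $R$ large enough that Lemma~\ref{xxw2} applies, set
\[
A(R)_{ij}=\int_{B_R(p)}v_iv_j\,dm,\qquad D(R)=\det A(R)>0.
\]
If $\{u_i\}$ is orthonormal with respect to $\langle\cdot,\cdot\rangle_{\beta R}$, write $u_i=\sum_j c_{ij}v_j$, so that $C A(\beta R) C^T=I$ and the Gram matrix of $\{u_i\}$ on $B_R$ is $CA(R)C^T$. By the arithmetic–geometric mean inequality applied to its eigenvalues,
\[
\sum_{i=1}^k\int_{B_R(p)}u_i^2\,dm=\operatorname{tr}\bigl(CA(R)C^T\bigr)\ge k\,\det\!\bigl(CA(R)C^T\bigr)^{1/k}=k\left(\frac{D(R)}{D(\beta R)}\right)^{1/k}.
\]
Thus it is enough to exhibit $R>R_0$ with $D(R)/D(\beta R)\ge \beta^{-(2d+N+\delta)k}$.

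Suppose for contradiction that $D(R)/D(\beta R)<\beta^{-(2d+N+\delta)k}$ for every $R>R_0$, where $R_0$ is taken large enough that $D(R_0)>0$. Setting $R_j=\beta^jR_0$ and iterating gives
\[
D(R_j)>\beta^{j(2d+N+\delta)k}\,D(R_0)\qquad\text{for all }j\ge1.
\]
This is the lower bound that the iteration produces from the assumption.

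For the complementary upper bound, use Hadamard's inequality $D(R_j)\le\prod_{i=1}^k(A(R_j))_{ii}=\prod_{i=1}^k\int_{B_{R_j}(p)}v_i^2\,dm$. Since each $v_i\in H^d(X)$ satisfies $|v_i(x)|\le C_i(1+d(x,p))^d$, and since Theorem~\ref{t:BishopGromov} in the form \eqref{e:BishopGromov nonnegative} gives $m(B_{R_j}(p))\le C R_j^N$ for $R_j\ge1$, we obtain
\[
\int_{B_{R_j}(p)}v_i^2\,dm\le C\,R_j^{2d+N},\qquad\text{hence}\qquad D(R_j)\le C^k R_j^{(2d+N)k}=C^k\beta^{j(2d+N)k}R_0^{(2d+N)k}.
\]
Combining the two bounds yields $\beta^{j\delta k}D(R_0)\le C^k R_0^{(2d+N)k}$ for all $j$, which is absurd as $j\to\infty$ because $\beta>1$ and $D(R_0)>0$. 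Therefore the desired $R>R_0$ must exist, and the displayed trace inequality follows from the AM–GM reduction above.

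The argument is essentially routine once the basis-invariance of $D(R)/D(\beta R)$ is recognized; the only delicate point is ensuring that $R_0$ is taken past the threshold from Lemma~\ref{xxw2} so that $D(R_0)>0$, which is needed to run the iteration and to keep the final contradiction quantitative.
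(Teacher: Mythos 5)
Your argument is correct and is precisely the standard Gram-determinant argument (trace bounded below via AM--GM by $(D(R)/D(\beta R))^{1/k}$, then an iteration contradiction using Hadamard's inequality, the polynomial growth of the $v_i$, and the Bishop--Gromov volume bound) that the paper invokes by citing \cite[Lemma 28.3]{Li12} and \cite[Lemma 3.7]{Hua11}. The one delicate point --- enlarging $R_0$ past the threshold of Lemma~\ref{xxw2} so that $D(R_0)>0$ --- is handled correctly, since any $R$ exceeding the enlarged $R_0$ still exceeds the original one.
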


The following lemma can be derived from the mean value inequality
for subharmonic functions, see Theorem \ref{t:Moser iteration} (a).
\begin{lemma}\label{lfd2}
Let $(X,d,m)$ be an $RCD^*(0,N)$ mms and $S$ be a
$k$-dimensional subspace of $H^q(X)$. Then there exists a constant
$C(N)$ such that for any basis of $S,$ $\{u_i\}_{i=1}^k,$ $\forall\
p\in X, R>0, 0<\epsilon<\frac{1}{2}$ we have
$$\sum_{i=1}^k\int_{B_R(p)}u_i^2dm\leq C(N)\epsilon^{-(N-1)}\sup_{u\in \langle A,U\rangle}\int_{B_{(1+\epsilon)R}(p)}u^2dm,$$
where $\langle A,U\rangle=\{v=\sum_ia_iu_i, \sum_ia_i^2=1\}.$
\end{lemma}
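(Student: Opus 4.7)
The approach is to diagonalize the Gram matrix of the basis with respect to $L^2(B_{(1+\epsilon)R})$ by an orthogonal change of basis, reducing the lemma to a bound on the trace of the reproducing kernel, and then to control this trace pointwise via the mean value inequality \eqref{mvi} and integrate using Bishop-Gromov \eqref{e:BishopGromov nonnegative}.

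First I would set $M_{ij}:=\int_{B_{(1+\epsilon)R}(p)}u_iu_j\,dm$ and choose an orthogonal matrix $O$ so that $w_i:=\sum_jO_{ij}u_j$ satisfies $\int_{B_{(1+\epsilon)R}}w_iw_j\,dm=\mu_i\delta_{ij}$, where $\mu_i$ are the eigenvalues of $M$. Since $O$ is orthogonal, $\sum_i u_i^2\equiv\sum_i w_i^2$ pointwise, the set $\langle A,U\rangle$ coincides with $\langle A,W\rangle$, and $\sup_{u\in\langle A,U\rangle}\int_{B_{(1+\epsilon)R}}u^2\,dm=\max_i\mu_i$. Writing $\tilde w_i:=w_i/\sqrt{\mu_i}$ for the induced $L^2(B_{(1+\epsilon)R})$-orthonormal basis and $K(x,x):=\sum_i\tilde w_i(x)^2$, the identity
$$\sum_{i=1}^k\int_{B_R(p)}u_i^2\,dm=\sum_i\mu_i\int_{B_R}\tilde w_i^2\,dm\leq\Big(\max_i\mu_i\Big)\int_{B_R(p)}K(x,x)\,dm$$
reduces the claim to showing $\int_{B_R(p)}K(x,x)\,dm\leq C(N)\epsilon^{-(N-1)}$.

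Next I would bound $K$ pointwise. For any unit $a\in\mathbb{R}^k$, the function $v:=\sum_ia_i\tilde w_i$ is harmonic, locally Lipschitz by Lemma \ref{l:Lipschitz regularity}, and by the Leibniz rule in Theorem \ref{t:basic properties of weak upper gradient} satisfies $\Delta(v^2)=2|\nabla v|^2\geq 0$, so $v^2$ is subharmonic. For $x\in B_R(p)$ set $\rho(x):=(1+\epsilon)R-d(x,p)\geq\epsilon R$, giving $B_{\rho(x)}(x)\subset B_{(1+\epsilon)R}(p)$; applying \eqref{mvi} on the pair $B_{\rho(x)/2}(x)\subset B_{\rho(x)}(x)$ together with the orthonormality yields
$$v(x)^2\leq\frac{C(N)}{|B_{\rho(x)}(x)|}\int_{B_{\rho(x)}(x)}v^2\,dm\leq\frac{C(N)}{|B_{\rho(x)}(x)|},$$
and taking the supremum over unit $a$ gives $K(x,x)\leq C(N)/|B_{\rho(x)}(x)|$ for every $x\in B_R(p)$.

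Finally I would integrate this and extract the sharp exponent. Bishop-Gromov \eqref{e:BishopGromov nonnegative} applied at $x$ with radii $\rho(x)$ and $3R$, combined with $B_{3R}(x)\supset B_R(p)$, gives $|B_{\rho(x)}(x)|\geq C(N)(\rho(x)/R)^N|B_R(p)|$, so
$$\int_{B_R(p)}K(x,x)\,dm\leq C(N)\frac{R^N}{|B_R(p)|}\int_{B_R(p)}\rho(x)^{-N}\,dm.$$
Rewriting the last integral as a Stieltjes integral $\int_0^R((1+\epsilon)R-t)^{-N}\,dV(t)$ with $V(t):=|B_t(p)|$ and changing variable to $s:=V(t)/V(R)$, the Bishop-Gromov lower bound $V(t)\geq V(R)(t/R)^N$ translates to $t(s)\leq Rs^{1/N}$, reducing the estimate to $V(R)\int_0^1\big((1+\epsilon)R-Rs^{1/N}\big)^{-N}\,ds$, which after the substitution $u=s^{1/N}$ is bounded by $C(N)|B_R(p)|\epsilon^{-(N-1)}R^{-N}$, yielding the desired $\epsilon^{-(N-1)}$. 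The main obstacle is precisely this last step: a na\"ive fixed-radius choice $r=\epsilon R$ in the mean value inequality produces only $\epsilon^{-N}$, and recovering the sharp $\epsilon^{-(N-1)}$ --- which is essential for the $Cd^{N-1}$ bound in Theorem \ref{t:polynomial growth harmonics} --- requires this variable-radius $\rho(x)$ together with the Bishop-Gromov concentration $V(t)\geq V(R)(t/R)^N$.
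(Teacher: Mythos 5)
Your proposal is correct and follows essentially the same route as the paper's proof: a pointwise bound on the reproducing kernel via the mean value inequality at the variable radius $\rho(x)=(1+\epsilon)R-d(x,p)$ (the paper realizes the identical bound by rotating the basis at each point $x$ so that only one element is nonzero there), followed by the same two applications of Bishop--Gromov and the same integration-by-parts/change-of-variables trick that upgrades the naive $\epsilon^{-N}$ to the sharp $\epsilon^{-(N-1)}$. The only point to add is the degenerate case where the Gram matrix $M$ may be singular for small $R$ (Lemma \ref{xxw2} only rules this out for large $R$): if $\mu_i=0$ then $w_i\equiv 0$ $m$-a.e.\ on $B_{(1+\epsilon)R}\supset B_R(p)$, so those terms contribute nothing and can be dropped before normalizing by $\sqrt{\mu_i}$.
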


\begin{proof} For fixed $x\in B_R(p),$ we set
$S_x=\{u\in S\mid u(x)=0\}.$ The subspace $S_x\subset S$ is of at
most codimension 1, since for any $v,w\not\in S_x,$
$v-\frac{v(x)}{w(x)}w\in S_x.$ Then there exists an orthogonal
transformation mapping $\{u_i\}_{i=1}^k$ to $\{v_i\}_{i=1}^k$ such
that $v_i\in S_x,\ i\geq2.$ By the mean value inequality
\eqref{mvi}, we have
\begin{eqnarray}
\sum_{i=1}^ku_i^2(x)&=&\sum_{i=1}^kv_i^2(x)=v_1^2(x)
\leq C \fint_{B_{(1+\epsilon)R-r(x)}(x)}v_1^2dm\nonumber\\
&\leq& C|B_{(1+\epsilon)R-r(x)}(x)|^{-1}\sup_{u\in \langle
A,U\rangle}\int_{B_{(1+\epsilon)R}(p)}u^2dm\label{plg11},
\end{eqnarray} where $r(x)=d(p,x)$. For simplicity, we write $V_x(t)=|B_t(x)|.$ By the Bishop-Gromov volume comparison \eqref{e:BishopGromov}, $V_x(t)$ is a monotone non-decreasing Lipschitz function in $t\in[0,\infty).$ For any $t\geq 0,$ we define the surface measure of $\partial B_x(t)$ by the so-called lower Minikowski content $$A_x(t):=\liminf_{\epsilon\to0_+}\frac{V_x(t+\epsilon)-V_x(t)}{\epsilon}.$$ By the properties of $V_x(t),$ one has $A_x(t)=\frac{d}{dt}V_x(t)$ for a.e. $t\in [0,\infty)$ and
$$V_x(t)=\int_0^t A_x(s)ds,\quad \forall\ t\geq0.$$ 

The Bishop-Gromov volume comparison \eqref{e:BishopGromov} yields
$$V_x((1+\epsilon)R-r(x))\geq\left(\frac{(1+\epsilon)R-r(x)}{2R}\right)^NV_x(2R)\geq\left(\frac{(1+\epsilon)R-r(x)}{2R}\right)^NV_p(R).$$
Hence, substituting it into \eqref{plg11} and integrating over
$B_R(p)$, we have
\begin{equation}\label{plg22}
\sum_{i=1}^k\int_{B_R(p)}u_i^2dm\leq \frac{C2^N}{V_p(R)}\sup_{u\in
\langle
A,U\rangle}\int_{B_{(1+\epsilon)R}(p)}u^2dm\int_{B_R(p)}(1+\epsilon-R^{-1}r(x))^{-N}dm(x)
\end{equation}
Define $f(t)=(1+\epsilon-R^{-1}t)^{-N},$ then
$f'(t)=\frac{N}{R}(1+\epsilon-R^{-1}t)^{-(N+1)}\geq0.$ Since
$|\nabla r|_w(x)=1$ for $m$-a.e. $x\in X$ (see
\cite[Theorem~5.3]{Gigli2012} and the proof of
\cite[Corollary~5.15]{Gigli2012}), the coarea formula implies that
$$\int_{B_R(p)}f(r(x))dm(x)\leq \int_0^Rf(t)A_p(t)dt.$$ Since $A_p(t)=V^{'}_p(t)\ a.e.,$
integrating by parts we obtain
$$\int_0^Rf(t)A_p(t)dt=f(t)V_p(t)\mid_0^R-\int_0^RV_p(t)f'(t)dt.$$
Noting that $f^{'}(t)\geq0$ and the Bishop-Gromov volume comparison
\eqref{e:BishopGromov}, we have
\begin{eqnarray*}
\int_0^RV_p(t)f'(t)dt&\geq&\frac{V_p(R)}{R^N}\int_0^Rt^Nf'(t)dt\\
&=&\frac{V_p(R)}{R^N}\{t^Nf(t)\mid_0^R-N \int_0^R t^{(N-1)}f(t)dt\}
\end{eqnarray*}
Therefore
\begin{equation*}
\int_{B_R(p)}f(r(x))dm(x)\leq\frac{NV_p(R)}{R^N}\int_0^Rt^{(N-1)}f(t)dt
\leq\frac{N}{N-1}V_p(R)\epsilon^{-(N-1)}.
\end{equation*}
Combining this with \eqref{plg22}, we prove the lemma.\end{proof}

By using the previous two lemmas, we are able to prove the optimal dimension estimate
for the space of polynomial growth harmonic functions.

\begin{proof}[Proof of Theorem \ref{t:polynomial growth
harmonics}] For any $k$-dimensional subspace $S\subset H^q(X)$, we
set $\beta=1+\epsilon$. Let $\{u_i\}_{i=1}^k$ be an orthonormal
basis of $S$ with respect to the inner product $<\cdot,\cdot>_{\beta
R}.$ By Lemma \ref{lfd1}, we have
$$\sum_{i=1}^k\int_{B_R(p)}u_i^2dm\geq k(1+\epsilon)^{-(2q+N+\delta)}.$$
Lemma \ref{lfd2} implies $$\sum_{i=1}^k\int_{B_R(p)}u_i^2dm\leq
C(N)\epsilon^{-(N-1)}.$$ Setting $\epsilon=\frac{1}{2q}$ and letting
$\delta$ tend to 0, we have \begin{equation}\label{fdt1}k\leq
C(N)\left(\frac{1}{2q}\right)^{-(N-1)}\left(1+\frac{1}{2q}\right)^{(2q+N+\delta)}\leq
Cq^{N-1}.\end{equation} Noting that \eqref{fdt1} holds for arbitrary
subspace $S$ of $H^q(X)$, we prove the theorem.
\end{proof}

By Corollary \ref{c:sublinear growth}, we know that
$H^{\alpha}(X)=1$ for any $\alpha<1.$ The Bochner inequality can be
used to obtain the following dimension estimate for the space of
harmonic functions of linear growth. This estimate for the dimension
of the space of linear growth harmonic functions is more tricky, see
\cite{Li12}.

The following auxiliary theorem on the behavior of subharmonic
functions on $RCD^*(0,N)$ mms appearing in the proof of Theorem
\ref{t:linear growth harmonics} is interesting in its own right. 

\begin{thm}[Mean value theorem at infinity]\label{t:mean value theorem}
Let $(X,d,m)$ be an $RCD^*(0,N)$ mms. Then for any bounded
nonnegative subharmonic function $u,$
$$\lim_{R\to \infty} \fint_{B_R} udm=\ess \sup_X u,$$ where $\fint_{B_R}
udm=\frac{1}{|B_R|}\int_{B_R}udm.$
\end{thm}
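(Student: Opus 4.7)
The plan is to reduce the statement to the weak Harnack inequality for superharmonic functions (Theorem \ref{t:Moser iteration}(b)), which is the only deep analytic input available in the $RCD^*(0,N)$ setting. Let $M := \ess\sup_X u$, finite by boundedness of $u$. The bound $\limsup_{R\to\infty} \fint_{B_R} u\, dm \leq M$ is immediate since $u \leq M$ a.e. The real work is to prove the matching lower bound, and I will carry this out by analyzing the nonnegative quantity $v := M - u$.

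First I would observe that $v$ is a nonnegative superharmonic function on all of $X$: constants are harmonic, $u$ is subharmonic, so $\mathcal{L}_v = -\mathcal{L}_u \leq 0$; moreover $v \geq 0$ a.e. since $u \leq M$ a.e. Applying the weak Harnack inequality on $B_{2R}$, which is legitimate because superharmonicity is a local property (Theorem \ref{t:basic properties of weak upper gradient}(3)), yields a constant $C = C(N) > 0$ with
\[
\fint_{B_{2R}} v\, dm \leq C^{-1}\, \ess\inf_{B_R} v \qquad \text{for every } R > 0.
\]
Consequently, the claim $\fint_{B_R} u\, dm \to M$ is equivalent to $\ess\inf_{B_R} v \to 0$.

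Next I would establish the latter by exploiting the definition of essential supremum. Fix $\epsilon > 0$; the set $A_\epsilon := \{v < \epsilon\} = \{u > M - \epsilon\}$ has $m(A_\epsilon) > 0$. Because $X = \bigcup_{R>0} B_R$ is an increasing union, continuity of the measure from below gives $m(A_\epsilon \cap B_R) \uparrow m(A_\epsilon) > 0$, so there exists $R_0 = R_0(\epsilon)$ with $m(A_\epsilon \cap B_R) > 0$ for all $R \geq R_0$. For such $R$, the inequality $v \geq \epsilon$ fails on a set of positive measure in $B_R$, forcing $\ess\inf_{B_R} v \leq \epsilon$. Since $R \mapsto \ess\inf_{B_R} v$ is nonincreasing and $\epsilon$ was arbitrary, $\ess\inf_{B_R} v \to 0$. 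Combined with the displayed weak Harnack estimate, this gives $\fint_{B_{2R}} v\, dm \to 0$, i.e., $\fint_{B_R} u\, dm \to M$, as desired.

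There is no genuine analytic obstacle in this approach; the crux lies entirely in the weak Harnack inequality, and once that is granted the remaining step is the elementary observation that a function cannot remain essentially bounded below its essential supremum on arbitrarily large balls. The main virtue of this route, compared with the heat kernel estimates of Li or the monotonicity formulas of Munteanu--Wang and Zhang--Zhu, is that it relies only on Moser iteration, which is available as soon as one has volume doubling and a Poincar\'e inequality, and hence transplants without modification to the nonsmooth $RCD^*(0,N)$ setting.
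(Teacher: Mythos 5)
Your proposal is correct and follows essentially the same route as the paper: both pass to $w=\ess\sup_X u-u$, note it is a bounded nonnegative superharmonic function, and combine the weak Harnack inequality of Theorem \ref{t:Moser iteration}(b) with the observation that $\ess\inf_{B_R}w\to 0$ (which the paper asserts directly from $\ess\inf_X w=0$ and you justify by continuity of the measure from below). The only difference is expository detail, not substance.
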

\begin{proof}
Set $w=\ess \sup_X u-u.$ It suffices to show that $$\lim_{R\to
\infty} \fint_{B_R} w dm=0.$$ Since $\ess \inf_X w=0,$ for any
$\epsilon>0$ there exists an $R_{\epsilon}>0$ such that $$\ess
\inf_{B_{R_{\epsilon}}}w<\epsilon.$$ Note that $w$ is a bounded
nonnegative superharmonic function on $X.$ The weak Harnack
inequality, Theorem \ref{t:Moser iteration} (b), implies that for
any $R\geq 2R_{\epsilon}$
$$\fint_{B_R} wdm\leq C\ess \inf_{B_{\frac{1}{2}R}}w \leq \ess
\inf_{B_{R_{\epsilon}}}w<\epsilon.$$ This proves the lemma.
\end{proof}

Now we prove the dimension estimate for linear growth harmonic
functions on  $RCD^*(0,N)$ mms using only the weak Harnack inequality
for superharmonic functions.

\begin{proof}[Proof of Theorem \ref{t:linear growth
harmonics}] We claim that for any $f\in H^1(X),$ $|\nabla f|_w^2$ is a
bounded subharmonic function on $X$. By the Bochner inequality in
Theorem~\ref{t:Bochner and W12}, $|\nabla f|_w^2$ is a subharmonic
function. Using the Caccioppoli inequality, Theorem~\ref{l:Caccio},
and the mean value inequality, Theorem~\ref{t:Moser iteration} (a),
we have \begin{eqnarray*}\ess \sup_{B_R} |\nabla f|_w^2&\leq&
C\fint_{B_{2R}}|\nabla f|_m^2dm\leq \frac{C}{R^2}\fint_{B_{4R}}
f^2dm\\
&\leq& C\frac{(\sup_{B_{4R}}f)^2}{R^2}\leq C,\end{eqnarray*} where
we used the linear growth property of $f$ in the last inequality.
This is true for any $R>0,$ hence $\ess \sup_X |\nabla f|_m^2\leq C.$
This proves the claim. Hence, Theorem~\ref{t:mean value theorem}
yields that \begin{equation}\label{e:mean value gradient}\lim_{R\to
\infty} \fint_{B_R} |\nabla f|_m^2 dm=\ess \sup_X |\nabla
f|_m^2.\end{equation}

For a fixed point $p\in X,$ we define a subspace of $H^1(X)$ by
$H'=\{f\in H^1(X)| f(p)=0\},$ and a bilinear form $D$ on $H'$ by
$$D(f,g)=\lim_{R\to\infty} \fint_{B_R(p)}\langle\nabla f,\nabla
g\rangle dm.$$ It is easy to see that $H'$ is of at most codimension
one in $H^1(X).$ In addition, $D$ is an inner product on $H'$ by the
mean value theorem at infinity, Theorem~\ref{t:mean value theorem}.
Given any finite dimensional subspace $H''$ in $H'$ with
$\mathrm{dim} H''=k,$ let $\{f_1,f_2,\cdots,f_k\}$ be an orthonormal
basis of $H''$ with respect to the inner product $D.$ Set
$F^2(x):=\sum_{i=1}^k f_i^2(x)$ and
$F_{\delta}(x):=\sqrt{\sum_{i=1}^k f_i^2(x)+\delta},$ ($\delta>0$).
Since $\{f_i\}_{i=1}^k$ are Lipschitz, the weak upper gradient
$|\nabla f_i|_w(x)$ is well defined for $m$-a.e. $x\in X,$ that is,
there exists a measurable subset $Y\subset X$ with $m(X\setminus
Y)=0$ such that $|\nabla f_i|_w(y)$ is well defined for all $y\in
Y.$ For any $y\in Y,$ there is an orthogonal transformation
$T_y:\R^k\to \R^k$ such that
$T_y(f_1(y),f_2(y),\cdots,f_k(y))=(\sum_{i=1}^kf_i^2(y),0,\cdots,0).$
We denote $g_i(z):=\sum_{j=1}^kT_{y,ij}f_j(z)$ for any $z\in X.$
Clearly, $\{g_i\}_{i=1}^k$ is an orthonormal basis of $H''$ with
respect to $D$ and $(F_{\delta})^2(z)=\sum_{i=1}^k g_i^2(z)+\delta$
for all $z\in X.$ Since $T_y$ is a constant matrix, $|\nabla
g_i|_w(z)$ is well defined for all $z\in Y.$

By the product rule $(5)$ in Theorem \ref{t:basic properties of weak
upper gradient}, \begin{eqnarray*}|\nabla
F_{\delta}|_w(y)F_{\delta}(y)&=&\sum_{i=1}^k g_i(y)|\nabla
g_i|_w(y)\\
&=&g_1(y)|\nabla g_1|_w(y),\ \ ({\rm by}\ g_i(y)=0,\ i\geq
2).\end{eqnarray*}

Since $\{g_i\}_{i=1}^k$ is the orthonormal basis of $H'',$ the
equation \eqref{e:mean value gradient} implies that $\ess \sup_X
|\nabla g_i|_w\leq 1$ for any $1\leq i\leq k.$ Then $\{g_i\}_{i=1}^k$
are Lipschitz functions of Lipschitz constant at most $1$. Hence the
chain rule yields for all $y\in Y$,
$$|\nabla F_{\delta}|_w(y)\leq\frac{g_1(y)}{\sqrt{g_1^2(y)+\delta}}\leq
1.$$ Hence for any $\delta>0,$ $F_{\delta}$ is a Lipschitz function
of Lipschitz constant at most $1$. Since $F_{\delta}(x)\to F(x)$ for
any $x\in X$ as $\delta \to 0,$ $F$ is a Lipschitz function with
$|\nabla F|_w\leq 1.$ Note that $F(p)=0,$ integrating along a geodesic
we have
$$F(x)\leq d(x,p).$$

For simplicity, we write $B_R$ for $B_R(p)$ and $A_R$ for the surface measure $A_p(R).$
For any $R>0,$ $\epsilon>0,$ we define a cut-off function as
$$\chi_{\epsilon}(x):=\frac{(R+\epsilon-d(x,p))_{+}}{\epsilon}\wedge
1.$$ Then $\chi_{\epsilon}$ is a Lipschitz function supported in
$B_{R+\epsilon}$ with $\chi_{\epsilon}|_{B_R}\equiv 1$ and $|\nabla
\chi_{\epsilon}|_m\leq \frac{1}{\epsilon}.$ Since $\{f_i\}_{i=1}^k$
are harmonic, $F^2$ is subharmonic, i.e. $\mathcal{L}_{F^2}\geq 0.$
Then
\begin{eqnarray*}
2\sum_{i=1}^k\int_{B_R}|\nabla
f_i|_w^2dm&=&\int_{B_R}\mathcal{L}_{F^2}\leq
\mathcal{L}_{F^2}(\chi_{\epsilon})\\
&=&-\int_{B_{R+\epsilon}} \langle \nabla F^2,\nabla
\chi_{\epsilon}\rangle dm\\
&\leq& \frac{2}{\epsilon}\int_{B_{R+\epsilon}\setminus B_R} F|\nabla
F|_wdm\\
&\leq& \frac{2(R+\epsilon)}{\epsilon}|B_{R+\epsilon}\setminus B_R|.
\end{eqnarray*}
Let $\epsilon\to 0,$ we have for a.e. $R>0$
\begin{equation}\label{eq111}\sum_{i=1}^k\fint_{B_R}|\nabla f_i|_w^2dm\leq
\frac{R\cdot A_R}{|B_R|},
\end{equation}
where $A_R:=\liminf_{\epsilon\to 0+}  \frac{|B_{R+\epsilon}\setminus B_R|}{\epsilon}$.

The fact that $\{f_i\}_{i=1}^k$ is the orthonormal basis for the
inner product $D$ implies that for any $\epsilon_1>0,$ there exists
$R_{\epsilon_1}$ such that for any $R\geq R_{\epsilon_1}$ we have
$$\sum_{i=1}^k\fint_{B_R}|\nabla
f_i|_w^2dm \geq k-\epsilon_1.$$ Combining this with \eqref{eq111}, we
obtain for any $R\geq R_{\epsilon_1},$
$$\frac{k-\epsilon_1}{R}\leq\frac{A_R}{|B_R|}.$$ Integrating this
inequality from $R_{\epsilon_1}$ to $R,$ we have for any $R\geq
R_{\epsilon_1}$
$$\left(\frac{R}{R_{\epsilon_1}}\right)^{k-\epsilon_1}\leq\frac{|B_R|}{|B_{R_{\epsilon_1}}|}.$$ Hence the assumption \eqref{e:volume growth n} on the volume growth of $X$ yields
$k-\epsilon_1\leq n.$ Let $\epsilon_1\to 0,$ we prove the theorem.

\end{proof}

\bibliography{Riemann}

\begin{thebibliography}{AGMR15}

\bibitem[AGMR15]{AmbrosioGigliMondinoRajala12}
L.~Ambrosio, N.~Gigli, A.~Mondino, and T.~Rajala.
\newblock {Riemannian Ricci curvature lower bounds in metric measure spaces
  with $\sigma$-finite measure}.
\newblock {\em Trans. Amer. Math. Soc.}, 367(7):4661--4701, 2015.

\bibitem[AGS14a]{AGS2011a}
L.~Ambrosio, N.~Gigli, and G.~Savar\'{e}.
\newblock {Calculus and heat flow in metric measure spaces and applications to
  spaces with Ricci bounds from below}.
\newblock {\em Invent. Math.}, 195(2):289--391, 2014.

\bibitem[AGS14b]{AGS2011}
L.~Ambrosio, N.~Gigli, and G.~Savar\'{e}.
\newblock {Metric measure spaces with Riemannian Ricci curvature bounded from
  below}.
\newblock {\em Duke Math. J.}, 163(7):1405--1490, 2014.

\bibitem[AGS15]{Ambrosio2012}
L.~Ambrosio, N.~Gigli, and G.~Savar{\'{e}}.
\newblock {Bakry-{\'{E}}mery curvature-dimension condition and Riemannian Ricci
  curvature bounds}.
\newblock {\em Ann. Probab.}, 43(1):339--404, sep 2015.

\bibitem[AMS15]{AMS13}
L.~Ambrosio, A.~Mondino, and G.~Savar\'e.
\newblock Nonlinear diffusion equations and curvature conditions in metric
  measure spaces.
\newblock {\em arXiv:1509.07273}, 2015.

\bibitem[AMS16]{AMS14}
L.~Ambrosio, A.~Mondino, and G.~Savar\'e.
\newblock {On the Bakry--{\'E}mery Condition, the Gradient Estimates and the
  Local-to-Global Property of $RCD^*(K,N)$ Metric Measure Spaces}.
\newblock {\em J. Geom. Anal.}, 26:24--56, 2016.

\bibitem[BBI01]{BuragoBuragoIvanov01}
D.~Burago, Yu. Burago, and S.~Ivanov.
\newblock {\em A course in metric geometry}.
\newblock Number~33 in Graduate Studies in Mathematics. American Mathematical
  Society, Providence, RI, 2001.

\bibitem[BE85]{BakryEmery85}
D.~Bakry and M.~\'Emery.
\newblock {Diffusions hypercontractives}.
\newblock In {\em S\'eminaire de probabilit\'es, XIX, 1983/84}, number 1123 in
  Lecture Notes in Math., pages 177--206. Springer, Berlin, 1985.

\bibitem[BGP92]{BuragoGromovPerelman92}
Yu. Burago, M.~Gromov, and G.~Perelman.
\newblock {A. D. Aleksandrov spaces with curvatures bounded below}.
\newblock {\em Russian Math. Surveys}, 47(2):1--58, 1992.

\bibitem[BS10]{BS10}
K.~Bacher and K.-T. Sturm.
\newblock Localization and tensorization properties of the curvature-dimension
  condition for metric measure spaces.
\newblock {\em J. Funct. Anal.}, 259(1):28--56, 2010.

\bibitem[CC97]{CheegerColding97}
J.~Cheeger and T.~H. Colding.
\newblock {On the structure of spaces with Ricci curvature bounded below. I}.
\newblock {\em J. Differential Geom.}, 46(3):406--480, 1997.

\bibitem[CC00a]{CheegerColding00II}
J.~Cheeger and T.~H. Colding.
\newblock {On the structure of spaces with Ricci curvature bounded below. II}.
\newblock {\em J. Differential Geom.}, 54(1):13--35, 2000.

\bibitem[CC00b]{CheegerColding00III}
J.~Cheeger and T.~H. Colding.
\newblock {On the structure of spaces with Ricci curvature bounded below. III}.
\newblock {\em J. Differential Geom.}, 54(1):37--74, 2000.

\bibitem[CCM95]{CheegerColdingMinicozzi95}
J.~Cheeger, T.~H. Colding, and W.~P. Minicozzi.
\newblock {Linear growth harmonic functions on complete manifolds with
  nonnegative Ricci curvature}.
\newblock {\em Geom. Funct. Anal.}, 5(6):948--954, 1995.

\bibitem[CM97a]{ColdingMinicozzi97}
T.~H. Colding and W.~P. Minicozzi.
\newblock {Harmonic functions on manifolds}.
\newblock {\em Ann. of Math. (2)}, 146(3):725--747, 1997.

\bibitem[CM97b]{ColdingMinicozzi97JDG}
T.~H. Colding and W.~P. Minicozzi.
\newblock Harmonic functions with polynomial growth.
\newblock {\em J. Differential Geom.}, 46(1):1--77, 1997.

\bibitem[CM98a]{ColdingMinicozzi98}
T.~H. Colding and W.~P. Minicozzi.
\newblock {Liouville theorems for harmonic sections and applications}.
\newblock {\em Comm. Pure Appl. Math.}, 51(2):113--138, 1998.

\bibitem[CM98b]{ColdingMinicozzi98Weyl}
T.~H. Colding and W.~P. Minicozzi.
\newblock {Weyl type bounds for harmonic functions}.
\newblock {\em Invent. Math.}, 131(2):257--298, 1998.

\bibitem[CW07]{ChenWang07}
R.~Chen and J.~Wang.
\newblock Polynomial growth solutions to higher-order linear elliptic equations
  and systems.
\newblock {\em Pacific J. Math.}, 229(1):49--61, 2007.

\bibitem[CY75]{ChengYau75}
S.~Y. Cheng and S.~T. Yau.
\newblock {Differential equations on Riemannian manifolds and their geometric
  applications}.
\newblock {\em Comm. Pure Appl. Math.}, 28(3):333--354, 1975.

\bibitem[EKS15]{ErbarKuwadaSturm13}
M.~Erbar, K.~Kuwada, and K.-T. Sturm.
\newblock {On the equivalence of the entropic Curvature-Dimension condition and
  Bochner's inequality on metric measure spaces}.
\newblock {\em Invent. Math.}, 201(3):993--1071, 2015.

\bibitem[FOT94]{Fukushima1994}
M.~Fukushima, Y.~Oshima, and M.~Takeda.
\newblock {\em {Dirichlet Forms and Symmetric Markov Processes}}, volume 1994.
\newblock Walter de Gruyter, 1994.

\bibitem[GH14]{Gigli2014a}
N.~Gigli and B.~Han.
\newblock {Independence on $p$ of weak upper gradients on $RCD$ spaces}.
\newblock {\em arxiv:1407.7350}, page~10, jul 2014.

\bibitem[Gig13]{Gigli2013a}
N.~Gigli.
\newblock {The splitting theorem in non-smooth context}.
\newblock {\em arXiv:1302.5555}, feb 2013.

\bibitem[Gig15]{Gigli2012}
N.~Gigli.
\newblock {On the differential structure of metric measure spaces and
  applications}.
\newblock {\em Mem. Amer. Math. Soc.}, 236(1113):vi+91 pp., May 2015.

\bibitem[GM14]{Garofalo2013}
N.~Garofalo and A.~Mondino.
\newblock {Li-Yau and Harnack type inequalities in $RCD^{*}(K,N)$ metric
  measure spaces}.
\newblock {\em Nonlinear Anal.}, 95:721--734, 2014.

\bibitem[HL97]{HanLin97}
Q.~Han and F.~Lin.
\newblock {\em Elliptic partial differential equations}.
\newblock Number~1 in Courant Lecture Notes in Mathematics. New York
  University, Courant Institute of Mathematical Sciences, New York, 1997.

\bibitem[Hua09]{Hua09}
B.~Hua.
\newblock {Generalized Liouville theorem in nonnegatively curved Alexandrov
  spaces}.
\newblock {\em Chin. Ann. Math. Ser. B}, 30(2):111--128, 2009.

\bibitem[Hua11]{Hua11}
B.~Hua.
\newblock {Harmonic functions of polynomial growth on singular spaces with
  non-negative Ricci curvature}.
\newblock {\em Proc. Amer. Math. Soc.}, 139(6):2195--2205, 2011.

\bibitem[HX14]{HuaXia13}
B.~Hua and C.~Xia.
\newblock {A note on local gradient esitmate on Alexandrov spaces}.
\newblock {\em Tohoku Math. J. (2)}, 66:259--267, 2014.

\bibitem[Jia11]{Jiang2011}
R.~Jiang.
\newblock {Lipschitz continuity of solutions of Poisson equations in metric
  measure spaces}.
\newblock {\em Potential Analysis}, 37(3):281--301, 2011.

\bibitem[Jia12]{Jiao12}
Z.~Jiao.
\newblock {Analytic approaches and harmonic functions on Alexandrov spaces with
  nonnegative Ricci curvature}.
\newblock {\em J. Math. Anal. Appl.}, 390(1):301--306, 2012.

\bibitem[Jia14]{Jiang2013}
R.~Jiang.
\newblock {Cheeger-harmonic functions in metric measure spaces revisited}.
\newblock {\em J. Funct. Anal.}, 266(3):1373--1394, 2014.

\bibitem[Kel13]{Kell2013}
M.~Kell.
\newblock {A note on Lipschitz continuity of solutions of Poisson equations in
  metric measure spaces}.
\newblock {\em arxiv:1307.2224}, 2013.

\bibitem[KL00]{KimLee00}
S.~W. Kim and Y.~H. Lee.
\newblock {Polynomial growth harmonic functions on connected sums of complete
  Riemannian manifolds}.
\newblock {\em Math. Z.}, 233(1):103--113, 2000.

\bibitem[KRS03]{Koskela2003}
P.~Koskela, K.~Rajala, and N.~Shanmugalingam.
\newblock {Lipschitz continuity of Cheeger-harmonic functions in metric measure
  spaces}.
\newblock {\em J. Funct. Anal.}, 202(1):147--173, 2003.

\bibitem[Lee04]{Lee04}
Y.~H. Lee.
\newblock {Polynomial growth harmonic functions on complete Riemannian
  manifolds}.
\newblock {\em Rev. Mat. Iberoamericana}, 20(2):315--332, 2004.

\bibitem[Li86]{Li86}
P.~Li.
\newblock {Large time behavior of the heat equation on complete manifolds with
  nonnegative Ricci curvature}.
\newblock {\em Ann. of Math. (2)}, 124(1):1--21, 1986.

\bibitem[Li95]{Li95}
P.~Li.
\newblock {Harmonic functions of linear growth on K\"ahler manifolds with
  nonnegative Ricci curvature}.
\newblock {\em Math. Res. Lett.}, 2(1):79--94, 1995.

\bibitem[Li97]{Li97}
P.~Li.
\newblock {Harmonic sections of polynomial growth}.
\newblock {\em Math. Res. Lett.}, 4(1):35--44, 1997.

\bibitem[Li12]{Li12}
P.~Li.
\newblock {\em {Geometric analysis}}, volume 134 of {\em Cambridge Studies in
  Advanced Mathematics}.
\newblock Cambridge University Press, Cambridge, 2012.

\bibitem[LT89]{LiTam89}
P.~Li and L.-F. Tam.
\newblock Linear growth harmonic functions on a complete manifold.
\newblock {\em J. Differential Geom.}, 29(2):421--425, 1989.

\bibitem[LV07]{LottVillani07}
J.~Lott and C.~Villani.
\newblock {Weak curvature conditions and functional inequalities}.
\newblock {\em J. Funct. Anal.}, 245(1):311--333, 2007.

\bibitem[LV09]{LottVillani09}
J.~Lott and C.~Villani.
\newblock {Ricci curvature for metric-measure spaces via optimal transport}.
\newblock {\em Ann. of Math. (2)}, 169(3):903--991, 2009.

\bibitem[LW99]{LiWang99}
P.~Li and J.~Wang.
\newblock Mean value inequalities.
\newblock {\em Indiana Univ. Math. J.}, 48(4):1257--1283, 1999.

\bibitem[LW00]{LiWang00}
P.~Li and J.~Wang.
\newblock {Counting dimensions of L-harmonic functions}.
\newblock {\em Ann. of Math. (2)}, 152(2):645--658, 2000.

\bibitem[MW11]{MunteanuWang11}
O.~Munteanu and J.~P. Wang.
\newblock {Smooth metric measure spaces with non-negative curvature}.
\newblock {\em Comm. Anal. Geom.}, 19(3):451--486, 2011.

\bibitem[Per]{Perelman}
G.~Perel'man.
\newblock {DC structure on Alexandrov spaces}.
\newblock {\em preprint, preliminary version available online at
  www.math.psu.edu/petrunin/}.

\bibitem[Pet11]{Petrunin11}
A.~Petrunin.
\newblock {Alexandrov meets Lott-Villani-Sturm}.
\newblock {\em M\"unster J. Math.}, 4(3):53--64, 2011.

\bibitem[Raj12a]{RajalaJFA12}
T.~Rajala.
\newblock {Interpolated measures with bounded density in metric spaces
  satisfying the curvature-dimension conditions of Sturm}.
\newblock {\em J. Funct. Anal.}, 263(4):896--924, 2012.

\bibitem[Raj12b]{RajalaCV12}
T.~Rajala.
\newblock {Local Poincar\'e inequalities from stable curvature conditions on
  metric spaces}.
\newblock {\em Calc. Var. Partial Differential Equations}, 44(3--4):477--494,
  2012.

\bibitem[Sav14]{Savare2013}
G.~Savar\'e.
\newblock {Self-improvement of the Bakry-\'Emery condition and Wasserstein
  contraction of the heat flow in $RCD(K,\infty)$ metric measure spaces}.
\newblock {\em Discrete Contin. Dyn. Syst.}, 34(4):1641--1661, 2014.

\bibitem[Stu06a]{Sturm06a}
K.~T. Sturm.
\newblock {On the geometry of metric measure spaces, I}.
\newblock {\em Acta Math.}, 196(1):65--131, 2006.

\bibitem[Stu06b]{Sturm06b}
K.~T. Sturm.
\newblock {On the geometry of metric measure spaces, II}.
\newblock {\em Acta Math.}, 196(1):133--177, 2006.

\bibitem[STW00]{SungTamWang00}
C.-J. Sung, L.-F. Tam, and J.~Wang.
\newblock Spaces of harmonic functions.
\newblock {\em J. London Math. Soc. (2)}, 61(3):789--806, 2000.

\bibitem[Tam98]{Tam98}
L.-F. Tam.
\newblock A note on harmonic forms on complete manifolds.
\newblock {\em Proc. Amer. Math. Soc.}, 126(10):3097--3108, 1998.

\bibitem[vR08]{Renesse08}
M.~von Renesse.
\newblock {On local Poincar\'e via transportation}.
\newblock {\em Math. Z.}, 259(1):21--31, 2008.

\bibitem[Wan95]{Wang95}
J.~Wang.
\newblock Linear growth harmonic functions on complete manifolds.
\newblock {\em Comm. Anal. Geom.}, 3(3--4):683--689, 1995.

\bibitem[Xia14]{Xia2013}
C.~Xia.
\newblock {Local gradient estimate for harmonic functions on Finsler
  manifolds}.
\newblock {\em Calc. Var. Partial Differential Equations}, 51(3-4):849--865,
  2014.

\bibitem[Yau75]{Yau75}
S.~T. Yau.
\newblock {Harmonic functions on complete Riemannian manifolds}.
\newblock {\em Comm. Pure Appl. Math.}, 28:201--228, 1975.

\bibitem[Yau87]{Yau87}
S.~T. Yau.
\newblock Nonlinear analysis in geometry.
\newblock {\em Enseign. Math. (2)}, 33(1--2):8109--15, 1987.

\bibitem[Yau93]{Yau93}
S.~T. Yau.
\newblock Open problems in geometry.
\newblock In {\em Differential geometry: partial differential equations on
  manifolds (Los Angeles, CA, 1990)}, number 54, Part 1 in Proc. Sympos. Pure
  Math., pages 1--28, Providence, RI, 1993. Amer. Math. Soc.

\bibitem[ZZ10]{ZhangZhu10}
H.-Ch. Zhang and X.-P. Zhu.
\newblock {Ricci curvature on Alexandrov spaces and rigidity theorems}.
\newblock {\em Comm. Anal. Geom.}, 18(3):503--553, 2010.

\bibitem[ZZ12]{Zhang2012}
H.-Ch. Zhang and X.-P. Zhu.
\newblock {Yau's gradient estimates on Alexandrov spaces}.
\newblock {\em J. Differential Geom.}, 91(3):445--522, 2012.

\end{thebibliography}
\bibliographystyle{alpha}

\

\end{document}